\definecolor{citecolour}{rgb}{0.0, 0.0, 0.8}
\colorlet{linkcolour}{green!50!black}
\newtheorem{prevtheorem}{Theorem}
\newtheorem{theorem}{Theorem}
\newtheorem{proposition}[theorem]{Proposition}
\newtheorem{lemma}[theorem]{Lemma}
\newtheorem{corollary}[theorem]{Corollary}
\newtheorem{question}{Question}
\newtheorem{example}[theorem]{Example}
\newtheorem{remark}[theorem]{Remark}
\numberwithin{equation}{section}
\theoremstyle:=definition,remark,plain\do{%
        \expandafter\g@addto@macro\csname th@\theoremstyle\endcsname{%
            \addtolength\thm@preskip\parskip
            }%
        }
\DeclareMathOperator{\FF}{\mathfrak{F}}
\DeclareMathOperator{\N}{\mathbf{N}}
\DeclareMathOperator{\fitt}{\mathbf{F}}
\DeclareMathOperator{\Hall}{Hall}
\DeclareMathOperator{\cart}{Cart}
\DeclareMathOperator{\inj}{Inj}
\DeclareMathOperator{\cent}{\mathbf{C}}
\DeclareMathOperator{\oh}{\mathbf{O}}
\DeclareMathOperator{\syl}{\mathbf{Syl}}
\DeclareMathOperator{\aut}{\mathbf{Aut}}
\newcommand{\gensub}[1]{\left\langle{#1}\right\rangle}
\newcommand{\card}[1]{\left\lvert{#1}\right\rvert}
\renewcommand{\leq}{\leqslant}
\renewcommand{\geq}{\geqslant}
\newcommand{\md}[1]{\,\left(\textnormal{mod}\ #1\right)}
\newenvironment{proofofA}{{\bf {Proof of Theorem \ref{Thm:A}.} }}{\hfill $\blacksquare$ \\}
\newenvironment{proofofB}{{\bf {Proof of Theorem \ref{Thm:B}.} }}{\hfill $\blacksquare$ \\}
\newenvironment{proofofC}{{\bf {Proof of Theorem \ref{Thm:injectorratio}.} }}{\hfill $\blacksquare$ \\}
\newenvironment{proofof}{{\bf {Proof.} }}{\hfill $\blacksquare$ \\}
\def\NN{{\mathfrak N}}
\def\NS{\theta_\NN^G(1)}
\def\SS{{\mathcal S}}
\def\C_p{{\mathbf{C}(p)}}
\def\ff{{\mathbb F}}
\def\cl{{\mathsf{c}\mathsf{l}}}
\begin{document}
\title{Counting in nilpotent injectors and Carter subgroups}

\author{S. Aivazidis}
\address{Department of Mathematics \& Applied Mathematics, University of Crete, Greece}
\email{s.aivazidis@uoc.gr}
\author{M. Loukaki}
\address{Department of Mathematics \& Applied Mathematics, University of Crete, Greece}
\email{mloukaki@uoc.gr}
\author{J. Shareshian}
\address{Department of Mathematics, Washington University, One Brookings Drive, St Louis, MO 63124 USA}
\email{shareshi@math.wustl.edu}

 \thanks{The first author is partially supported by the Hellenic Foundation for Research and Innovation, Project HFRI-FM17-1733.}

\begin{abstract}
We investigate number-theoretic properties of the collection of nilpotent injectors or nilpotent projectors containing certain subgroups of finite soluble (or ${\mathcal N}$-constrained) groups. 
\end{abstract}
\maketitle

\section{Introduction} 

Let $G$ be a finite group and let $\pi$ be a set of primes. Divisibility results about the number of nilpotent groups 
from a characteristic conjugacy class that contain a given subgroup of a group $G$ appear throughout the literature.  
For example, Iranzo, Medina, and P\'erez Monasor proved in~\cite{arithmetical_questions} the following theorem, 
which generalises work of Navarro in~\cite{Navarro} and Turull in~\cite{Turull}:
\begin{quote}
Given any $\pi$-separable group $G$ and any $\pi$-subgroup $X \leq G$, 
write $\nu_\pi(G,X)$ for the number of Hall $\pi$-subgroups of $G$ containing $X$. 
If $X \leq K \leq G$, then $\nu_\pi(K,X)$ divides $\nu_\pi(G,1)$.
\end{quote}

A primary aim of this paper is to provide results of a similar nature in which $\pi$-subgroups are replaced with nilpotent subgroups 
and Hall $\pi$-subgroups are replaced with one of two conjugacy classes of subgroups:
\begin{itemize}
\item Recall that a group $G$ is said to be $\mathcal{N}$-constrained
provided that $\cent_G(\fitt(G)) \leq \fitt(G)$. 
It is a standard fact that all finite soluble groups are $\mathcal{N}$-constrained.
Moreover, a subgroup $I$ is called a \textbf{nilpotent injector} of $G$
provided that for each subnormal subgroup $S$ of $G$,
$S \cap I$ is a maximal nilpotent subgroup of $S$.  In any finite soluble group,
nilpotent injectors exist and comprise a single conjugacy class,
as was proved by Fischer, Gasch\"{u}tz and Hartley in~\cite{injectoren}.
In fact, by work of Mann~\cite{mann_injectors} 
all finite $\mathcal{N}$-constrained groups contain nilpotent injectors; 
again, such injectors comprise a single conjugacy class.

\item The dual notion of a nilpotent injector is that of a \textbf{Carter subgroup}. 
These are, by definition, nilpotent self-normalising subgroups of $G$ and  were
introduced by Carter in~\cite{Carter}. 
They were later seen to be  specific instances of something more general;
they are the so-called projectors for the class of nilpotent groups.
\end{itemize}

The two classes just mentioned arise from two separate theories:
Fitting classes and the associated injectors and saturated formations
and the associated projectors. 
In the case of $\pi$-subgroups the two notions coincide since the
class of $\pi$-groups is a saturated Fitting formation,
but in general injectors and projectors need not be the same (when they exist) and, indeed, this is the case
with the class of nilpotent groups.

Let $G$ be an $\mathcal{N}$-constrained group  and  $H\leq G$  a fixed nilpotent subgroup of $G$. We write $\inj(G,H)$ for the set of nilpotent injectors of $G$ containing $H$ and  $n_\mathfrak{I}(G,H) = \card{\inj(G, H)}$  for its cardinality.
Every nilpotent injector of $G$ contains the Fitting subgroup $\fitt(G)$ of $G$ and thus, using the notation above,  the total number of nilpotent injectors of $G$ is $n_\mathfrak{I}(G, \fitt(G))$, which  for simplicity we will write  as  $n_\mathfrak{I}(G)$. So   $n_\mathfrak{I}(G) = \card{\inj(G)}$.

Our first main result is the following.

\begin{prevtheorem} \label{Thm:A}
Let $G$ be an ${\mathcal N}$-constrained group. If $G$ is nilpotent, set $m_G=1$.  Otherwise, define
\begin{equation}
m_G \coloneqq \gcd\left\{ p - 1 : p\in\mathbb{P},\, p \mid (G : I) \,\,\text{ with }\,\, I \in \inj(G) \right\}.
\end{equation}
If $H \leq G$, then either $n_\mathfrak{I}(G,H)=0$ or $n_\mathfrak{I}(G,H) \equiv 1\md{m_G}$.
\end{prevtheorem}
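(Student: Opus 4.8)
The plan is to reduce the congruence to a divisibility of the type proved by Iranzo, Medina and P\'erez Monasor, using the known structural description of nilpotent injectors.

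First dispose of trivialities. If $G$ is nilpotent it is its own unique nilpotent injector, so $n_\mathfrak{I}(G,H)=1$ for every $H\le G$ and $m_G=1$; so assume $G$ is not nilpotent, whence each $I\in\inj(G)$ is a proper subgroup and the set of primes occurring in the definition of $m_G$ is nonempty. If $\inj(G,H)=\varnothing$ there is nothing to prove, so fix $I_0\in\inj(G,H)$: then $H\le I_0$, $\fitt(G)\le I_0$, and $m_G\mid p-1$ for every prime $p\mid(G:I_0)$. The aim is $n_\mathfrak{I}(G,H)\equiv 1\pmod{m_G}$.

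\emph{Arithmetic core.} I claim it suffices to prove that $n_\mathfrak{I}(G,H)$ divides $n_\mathfrak{I}(G)$. Indeed, since the injectors form a single conjugacy class, $n_\mathfrak{I}(G)=(G:N_G(I_0))$, and $I_0\le N_G(I_0)$ forces $n_\mathfrak{I}(G)\mid(G:I_0)$. Hence if $n_\mathfrak{I}(G,H)\mid n_\mathfrak{I}(G)$, then every prime $q$ dividing $n_\mathfrak{I}(G,H)$ divides $(G:I_0)$, so $q\equiv 1\pmod{m_G}$ by the definition of $m_G$; therefore every prime-power divisor of $n_\mathfrak{I}(G,H)$, and hence $n_\mathfrak{I}(G,H)$ itself, is $\equiv 1\pmod{m_G}$.

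\emph{The divisibility (soluble case).} Here I invoke the structure of nilpotent injectors (Fischer--Gasch\"utz--Hartley~\cite{injectoren}, and Mann~\cite{mann_injectors} for the $\mathcal N$-constrained case): putting $N=\oh_{2'}(\fitt(G))$ and $C=\cent_G(N)$, which is normal in $G$, the nilpotent injectors of $G$ are exactly the subgroups $N\times T$ with $T\in\syl_2(C)$, and $N\times T\mapsto T$ is a bijection $\inj(G)\to\syl_2(C)$; in particular $n_\mathfrak{I}(G)=\card{\syl_2(C)}$. Since $\oh_{2'}(\fitt(G))$ and $\oh_2(\fitt(G))$ centralise one another, $T$ is normal in $N\times T$, and writing the nilpotent group $H$ as $\oh_{2'}(H)\times\oh_2(H)$ one checks that $H\le N\times T$ precisely when $\oh_{2'}(H)\le N$ and $\oh_2(H)\le T$. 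Thus, in the nonempty case, the bijection above restricts to a bijection between $\inj(G,H)$ and the set of Sylow $2$-subgroups of $C$ containing the $2$-subgroup $\oh_2(H)$. When $G$ is soluble, so is $C$, hence $C$ is $\{2\}$-separable, and the theorem of Iranzo, Medina and P\'erez Monasor~\cite{arithmetical_questions} quoted in the Introduction (applied with prime set $\{2\}$, ambient group $C$, and $K=C$) yields
\[
n_\mathfrak{I}(G,H)=\nu_{\{2\}}\bigl(C,\oh_2(H)\bigr)\ \mid\ \nu_{\{2\}}(C,1)=\card{\syl_2(C)}=n_\mathfrak{I}(G),
\]
which together with the arithmetic core settles the soluble case.

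The step I expect to be the main obstacle is the general $\mathcal N$-constrained case: there $C$ need not be $\{2\}$-separable (indeed $G$ itself may be insoluble), so the Iranzo--Medina--P\'erez Monasor divisibility is not directly available, and the bare assertion ``the number of Sylow $2$-subgroups containing a fixed $2$-subgroup divides the total'' is \emph{false} for arbitrary finite groups. One must instead push the counting through by hand, genuinely using Mann's structural results for $\mathcal N$-constrained groups, or reduce to the soluble situation. The verifications left implicit above — that $N\times T$ is nilpotent and contains $\fitt(G)$, the membership criterion for $H$, and the bijectivity of $N\times T\mapsto T$ — are routine consequences of the cited structure theorems.
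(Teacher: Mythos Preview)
Your argument rests on a structural description of nilpotent injectors that is simply wrong: you assert that the injectors of an $\mathcal{N}$-constrained group $G$ are exactly the subgroups $\oh_{2'}(\fitt(G))\times T$ with $T\in\syl_2(\cent_G(\oh_{2'}(\fitt(G))))$. The prime $2$ plays no distinguished role in the theory of nilpotent injectors, and this description fails already for small soluble groups. Take $G=\mathbb{F}_3^2\rtimes\mathrm{SL}_2(3)$ with the natural action, so $|G|=216$. Here $\fitt(G)=\mathbb{F}_3^2$ is a $3$-group (the action is faithful, so $\oh_2(G)=1$), hence $\oh_{2'}(\fitt(G))=\fitt(G)$ and $\cent_G(\fitt(G))=\fitt(G)$; your recipe returns $\fitt(G)$ itself, of order $9$. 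But the nilpotent injectors of $G$ are its Sylow $3$-subgroups, of order $27$: any nilpotent subgroup containing $\fitt(G)$ must be a $3$-group (a $2$-element in it would centralise $\fitt(G)$, impossible), and the Sylow $3$-subgroups are strictly larger than $\fitt(G)$. So even your ``soluble case'' collapses.

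The correct structure, due to Mann, is symmetric in all primes: for each prime $p$ dividing $|\fitt(G)|$, the Sylow $p$-subgroup $I_p$ of an injector $I$ is a Sylow $p$-subgroup of $\cent_G(\oh^{p}(\fitt(G)))$, and $I=\prod_p I_p$. There is no reason for $I_p$ to equal $\fitt(G)_p$ when $p$ is odd. Beyond this, you yourself flag that the general $\mathcal{N}$-constrained case is left open, so even absent the structural error the proposal would be incomplete. The paper does not argue via divisibility at all: using Mann's description it factors $n_\mathfrak{I}(G,H)=\prod_p n_p(\cent_G(F_{p'}),H_p)$ (after reducing to $H\geq\fitt(G)$), then shows each factor is $\equiv 1\pmod{m_G}$ by splitting off a normal Hall subgroup of $\cent_G(F_{p'})$ with prime support outside the set $\alpha=\{p:p\mid(G:I)\}$ and applying a Sylow-counting congruence in the complement. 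Your ``arithmetic core'' reduction is valid as far as it goes, but the input you feed into it is not available.
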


Recall that the M\"obius function $\mu_P$ on the set of ordered pairs from a finite poset $P$ is defined recursively by
\[
\mu_P(x,y)= \left\{ \begin{array}{lll} 
0                            & \mbox{if } x \not \leq y,\\ 
1                            & \mbox{if } x = y,        \\ 
-\sum_{x \leq z<y}\mu_P(x,z) & \mbox{if } x < y .
\end{array} \right.
\]
Another goal of this paper is to show that divisibility properties also occur in the case of sums of M\"obius numbers defined on specific posets of subgroups of $G$.  In particular, we show the following.

\begin{prevtheorem}\label{Thm:B}
Given a nontrivial ${\mathcal N}$-constrained group $G$ with nilpotent injector $I$, 
let $\mathcal{X}$ be the set of subgroups of $G$ that are contained in at least one conjugate of $I$.  
Order $\mathcal{X}$ by inclusion and set
\[
\theta_\mathcal{X}(\fitt(G)) \coloneqq
\sum_{K \in \mathcal{X}}\mu_\mathcal{X}(\fitt(G),K).
\]
Then $\theta_\mathcal{X}(\fitt(G))$ is divisible by $(I : \fitt(G))$.
\end{prevtheorem}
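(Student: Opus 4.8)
\noindent\textit{Sketch of the proof.}
Set $F\coloneqq\fitt(G)$, pass to $\overline{G}\coloneqq G/F$, and put $\overline{I}\coloneqq I/F$. Since $F$ is normal in $G$ and lies in every nilpotent injector, for each $K\in\mathcal{X}$ with $F\leq K$ the interval $[F,K]$ in $\mathcal{X}$ coincides with the interval $[F,K]$ in the full subgroup lattice of $G$, which the correspondence theorem identifies with the subgroup lattice of $K/F$; hence $\mu_{\mathcal{X}}(F,K)$ is the M\"obius number $\mu_{\mathcal{L}(K/F)}(1,K/F)$ of the group $K/F$. Consequently
\[
\theta_{\mathcal{X}}(F)=\sum_{\overline{K}\in\overline{\mathcal{X}}}\mu_{\mathcal{L}(\overline{K})}(1,\overline{K}),
\]
where $\overline{\mathcal{X}}$ is the set of subgroups of $\overline{G}$ contained in some conjugate of $\overline{I}$, and we are reduced to a statement inside $\overline{G}$.

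The key point is to recognise $\overline{\mathcal{X}}$ as the poset of all $2$-subgroups of a single subgroup of $\overline{G}$. Here I would use the structural description of nilpotent injectors in $\mathcal{N}$-constrained groups, to the effect that $I=\oh_{2'}(G)\cdot S$ with $S$ a Sylow $2$-subgroup of $\cent_{G}(\oh_{2'}(G))$; in particular $\overline{I}\cong S/\oh_{2}(G)$ is a $2$-group. Let $M\coloneqq\gensub{\overline{I}^{\,\overline{G}}}$ be the normal closure of $\overline{I}$ in $\overline{G}$. One checks that $\overline{I}$ is a Sylow $2$-subgroup of $M$: as $\cent_{G}(\oh_{2'}(G))$ is normal in $G$, all $G$-conjugates of $S$ lie in it, so $\gensub{S^{G}}\leq\cent_{G}(\oh_{2'}(G))$ and $S$ is a Sylow $2$-subgroup of $\gensub{S^{G}}$; passing to the quotient by $F$ gives $\card{\overline{I}}=\card{M}_{2}$. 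Hence the conjugates of $\overline{I}$ in $\overline{G}$ are exactly the Sylow $2$-subgroups of $M$, the set $\overline{\mathcal{X}}$ is precisely the collection of all $2$-subgroups of $M$, and therefore
\[
\theta_{\mathcal{X}}(F)=\sum_{\substack{Q\leq M\\ Q\text{ a }2\text{-group}}}\mu_{\mathcal{L}(Q)}(1,Q).
\]

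Next I would read the right-hand side as a reduced Euler characteristic. Let $\mathcal{S}_{2}(M)$ be the poset of non-identity $2$-subgroups of $M$ ordered by inclusion, and let $\widehat{\mathcal{S}}$ be obtained from it by adjoining a new least element $\widehat{0}$ and a new greatest element $\widehat{1}$. For a non-identity $2$-subgroup $Q$ of $M$ the interval $[\widehat{0},Q]$ in $\widehat{\mathcal{S}}$ is, after identifying $\widehat{0}$ with the trivial subgroup, the subgroup lattice of $Q$, so $\mu_{\widehat{\mathcal{S}}}(\widehat{0},Q)=\mu_{\mathcal{L}(Q)}(1,Q)$; expanding $\mu_{\widehat{\mathcal{S}}}(\widehat{0},\widehat{1})=-\sum_{\widehat{0}\leq x<\widehat{1}}\mu_{\widehat{\mathcal{S}}}(\widehat{0},x)$ then gives $\mu_{\widehat{\mathcal{S}}}(\widehat{0},\widehat{1})=-\theta_{\mathcal{X}}(F)$. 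By Philip Hall's theorem $\mu_{\widehat{\mathcal{S}}}(\widehat{0},\widehat{1})$ equals the reduced Euler characteristic $\widetilde{\chi}(\mathcal{S}_{2}(M))$ of the order complex of $\mathcal{S}_{2}(M)$, so $\theta_{\mathcal{X}}(F)=-\widetilde{\chi}(\mathcal{S}_{2}(M))$. I would then invoke Brown's congruence: for any finite group $H$ and any prime $p$, the reduced Euler characteristic $\widetilde{\chi}(\mathcal{S}_{p}(H))$ is divisible by $\card{H}_{p}$ (equivalently, the unreduced Euler characteristic is $\equiv 1\pmod{\card{H}_{p}}$). Applying this with $H=M$ and $p=2$ gives $\card{M}_{2}\mid\widetilde{\chi}(\mathcal{S}_{2}(M))$, and since $\card{M}_{2}=\card{\overline{I}}=(I:F)$ this is exactly $(I:F)\mid\theta_{\mathcal{X}}(F)$.

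The step I expect to demand the most care is the group theory of the second paragraph: establishing, for $\mathcal{N}$-constrained $G$, that $\overline{I}$ is a $2$-group and that it is a Sylow $2$-subgroup of its normal closure $M$, so that $\overline{\mathcal{X}}$ becomes the full poset of $2$-subgroups of $M$ with $\card{M}_{2}=(I:F)$. Once this reduction is secured the conclusion is forced by the two M\"obius-theoretic inputs, Hall's theorem and Brown's congruence. As a check: for $G=S_{4}$ we have $F=V_{4}$, $I=D_{8}$, $M\cong S_{3}$, and $\theta_{\mathcal{X}}(F)=1-3=-2$, which is divisible by $(I:F)=2$.
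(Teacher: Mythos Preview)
Your first paragraph is fine: since $\mathcal X$ is closed under taking subgroups, the interval $[F,K]$ in $\mathcal X$ really is the subgroup lattice of $K/F$, and the problem transfers to $\overline G=G/F$ and the poset $\overline{\mathcal X}$ of subgroups lying in some conjugate of $\overline I=I/F$. The argument collapses in the second paragraph, however, because the structural claim about nilpotent injectors is false. It is not true that $I=\oh_{2'}(G)\cdot S$ with $S\in\syl_2(\cent_G(\oh_{2'}(G)))$, and, more to the point, $\overline I$ need \emph{not} be a $p$-group for any single prime $p$. Mann's description (recorded in the paper as \autoref{Rem:Mann}) is that for every prime $p\in\varpi(F)$ the Sylow $p$-subgroup $I_p$ of $I$ is a Sylow $p$-subgroup of $\C_p\coloneqq\cent_G(F_{p'})$; hence $I/F\cong\prod_{p}I_p/F_p$ is nilpotent but can involve several primes. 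For a concrete obstruction take $G=S_4\times H$ with $H=(C_3\times C_3)\rtimes SL_2(3)$ (natural action): one checks $F(G)=V_4\times C_3^{\,2}$ and $I(G)=D_8\times P$ with $P\in\syl_3(H)$ of order $27$, so $I/F\cong C_2\times C_3$. Here $\overline{\mathcal X}$ is not the poset of $p$-subgroups of any group, and Brown's theorem for a single prime cannot be applied. Your sanity check $G=S_4$ succeeds only because $F(S_4)$ is a $2$-group, which forces $\overline I$ to be a $2$-group in that special case.

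The paper's proof supplies exactly the missing idea: a prime-by-prime factorisation of the poset above $F$. Writing $\mathcal Y_p$ for the poset of $p$-subgroups of $\C_p$ containing $F_p$, Mann's description yields a poset isomorphism $\{K\in\mathcal X:F\leq K\}\cong\prod_{p\in\varpi(F)}\mathcal Y_p$, whence $\mu_{\mathcal X}(F,K)=\prod_p\mu(F_p,K_p)$ and
\[
\theta_{\mathcal X}(F)=\prod_{p\in\varpi(F)}\ \sum_{U\in\mathcal Y_p}\mu(F_p,U).
\]
A Brown-type congruence (the Hawkes--Isaacs--\"Ozaydin version, applied inside $\C_p$) gives $(I:F)_p\mid\sum_{U\in\mathcal Y_p}\mu(F_p,U)$ for each $p$, and multiplying over $p$ yields $(I:F)\mid\theta_{\mathcal X}(F)$. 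Your argument is essentially the degenerate case $|\varpi(F)|=1$ of this; what is missing is the product decomposition that lets one invoke Brown one prime at a time.
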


There is a topological interpretation of \autoref{Thm:B}, 
as there is whenever a M\"obius function is involved. 
Given any poset $P$, 
the order complex $\Delta P$ is the abstract simplicial complex 
whose $k$-dimensional faces are the totally ordered subsets of size $k+1$ from $P$.  
Let $\hat{P}$ be the poset obtained from $P$ 
by adding a minimum element $\hat{0}$ and a maximum element $\hat{1}$.  
The reduced Euler characteristic $\tilde{\chi}(\Delta P)$ satisfies
\[
\tilde{\chi}(\Delta P) = \mu_{\hat{P}}(\hat{0}, \hat{1}),
\]
as was proved by Philip Hall and appears as \cite[Thm.~3.8.5]{StanleyEC1}.  
Let $\mathcal{X}_{>\fitt(G)}$ be the poset of all proper subgroups of $G$ 
strictly containing $\fitt(G)$ and contained in at least one nilpotent injector for $G$.  
\autoref{Thm:B} says that $\tilde{\chi}(\Delta\mathcal{X}_{>\fitt(G)})$ is divisible by $(I : \fitt(G))$.  There are various results about divisors of reduced Euler characteristics of posets of subgroups in the literature, some of which appear in \cite[Sections 5.3 and 5.4]{Smith}.

We write $\NN= \NN(G)$ for the poset of nilpotent subgroups of $G$. Then in view  of~\autoref{Thm:B}, one might ask whether $|I|$ divides 
$$
\NS:=\sum_{N \in \NN(G)}\mu(1,N).
$$  
Certainly such divisibility occurs when $G$ is nilpotent.  We will see, however, that $|I|/\NS$ can be an integer of arbitrarily large absolute value.  Indeed, we have the following result.

\begin{prevtheorem}\label{Thm:injectorratio}
If $p$ is a prime and $a$ is a positive integer, 
then there exist a soluble group $G$ 
and a nilpotent injector $I \leq G$ 
such that
\[
|I|/\NS=-p^a.
\]
\end{prevtheorem}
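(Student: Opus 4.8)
The plan is to exhibit such a $G$ as an affine group built over a Frobenius group, generalising the example $G=S_4$ (which realises ratio $-2$). Fix a prime $q$ with $q\equiv 1\md{p^a}$, put $Q=C_q$, let $R=C_{p^a}$ act faithfully on $Q$, and set $E=Q\rtimes R$, a Frobenius group with kernel $Q$. Choose an $\ff_p E$-module $V$ that is faithful and satisfies $\cent_V(Q)=0$; such a $V$ exists — one may take $V=\operatorname{Ind}_Q^E(T)$ for a nontrivial simple $\ff_p Q$-module $T$, since then $V|_Q$ is a direct sum of nontrivial $\ff_p Q$-modules (so $\cent_V(Q)=0$), and $V$ is faithful because $Q$ acts nontrivially on it while every nontrivial normal subgroup of the Frobenius group $E$ contains $Q$. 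Finally put
\[
G \;=\; V\rtimes E \;=\; V\rtimes(Q\rtimes R).
\]
This $G$ is soluble, and I claim $\card I/\NS=-p^a$ for a nilpotent injector $I$ of $G$.

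First I would pin down $\fitt(G)$ and $I$. From $\oh_p(G/V)=\oh_p(E)=1$ we get $\oh_p(G)=V$, and since $\oh_q(G)$ is a $q$-group centralising $\oh_p(G)=V$ it lies in $\cent_G(V)=V$ (faithfulness of $V$) and so is trivial; hence $\fitt(G)=V$, a $p$-group with $p\mid\card{G/\fitt(G)}$. Consequently any nilpotent injector $I$ contains $\fitt(G)=V$; being nilpotent, its Hall $p'$-subgroup centralises $\oh_p(I)\supseteq V=\fitt(G)$, hence lies in $\fitt(G)=V$ and is trivial; so $I$ is a $p$-group, and being a maximal nilpotent subgroup it must be a Sylow $p$-subgroup of $G$. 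As nilpotent injectors exist and form a single conjugacy class, every Sylow $p$-subgroup is an injector, so we may take $I=V\rtimes R$, of order $\card V\cdot p^a$.

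Next I would compute $\NS=\sum_{N\in\NN(G)}\mu_{\NN(G)}(1,N)$. The key local fact is $\cent_G(Q)=Q$: an element centralising $Q$ projects into $\cent_{G/V}(\overline Q)=\cent_E(Q)=Q$ (Frobenius), hence lies in $VQ$, whereupon its $V$-component lies in $\cent_V(Q)=1$. It follows that no nilpotent subgroup of $G$ has a nontrivial $q$-part: if $\oh_q(N)\neq1$ then, the Sylow $q$-subgroups of $G$ having order $q$, $\oh_q(N)$ is a full Sylow $q$-subgroup $Q^{g}$, and then $\oh_p(N)\leq\cent_G(Q^{g})=Q^{g}$ forces $\oh_p(N)=1$, i.e. $N=Q^{g}$. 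Hence, as a poset, $\NN(G)\setminus\{1\}$ is the disjoint union of the poset $\mathcal S_p(G)$ of nontrivial $p$-subgroups and the set $\{Q_1,\dots,Q_{n_q}\}$ of Sylow $q$-subgroups, the latter being pairwise incomparable atoms. Splitting the Möbius sum accordingly, and using that $\mathcal S_p(G)$ is contractible (as $\oh_p(G)\neq1$) so that $\sum_{P\in\mathcal S_p(G)\cup\{1\}}\mu(1,P)=-\tilde\chi(\Delta\mathcal S_p(G))=0$, I obtain
\[
\NS \;=\; \mu(1,1)\;+\;\sum_{P\in\mathcal S_p(G)}\mu(1,P)\;+\;\sum_{i=1}^{n_q}\mu(1,Q_i)\;=\;1+(-1)+(-n_q)\;=\;-n_q .
\]
Finally a Frattini argument applied to the normal subgroup $M=VQ$ of $G$ (the preimage of $Q\trianglelefteq E$) gives $n_q=[G:N_G(Q)]=[M:N_M(Q)]=[M:\cent_V(Q)\rtimes Q]=[M:Q]=\card V$. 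Therefore $\NS=-\card V$ and $\card I/\NS=(\card V\cdot p^a)/(-\card V)=-p^a$, as required.

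The step I expect to be the main obstacle is the packaging of the local hypotheses on $V$ and $Q$ — producing a module that is simultaneously faithful for $E$ and fixed-point-free for $Q$, and proving $\cent_G(Q)=Q$ — because everything afterwards (the splitting of $\NN(G)\setminus\{1\}$ into a contractible $p$-part plus $n_q$ isolated $q$-points, hence $\NS=-n_q$, and then the identity $n_q=\card V$ via Frattini) hinges on these two facts together with the Frobenius structure of $E$. As a consistency check, $p=2$, $a=1$, $q=3$ with $V$ the natural $2$-dimensional $\ff_2 S_3$-module gives $G=\ff_2^{2}\rtimes S_3\cong S_4$, $I\cong D_8$, $\NS=-4$, and ratio $-2$.
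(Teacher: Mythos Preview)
Your argument is correct and follows essentially the same blueprint as the paper's proof: build $G$ as a $p$-group $V$ extended by a Frobenius group of order $(\text{prime})\cdot p^{a}$, show that $\cent_G$ of the prime-order kernel is just that kernel so that $\NN(G)\setminus\{1\}$ splits as the (contractible, since $\oh_p(G)\neq 1$) poset of nontrivial $p$-subgroups together with isolated Sylow points for the other prime, deduce $\NS=-n$ with $n$ the number of those Sylow subgroups, and finish with $n=\card V$.

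The only real difference is packaging. The paper realises the group concretely inside $\mathrm{A\Gamma L}_1(p^{p^a})$, taking $V=\ff_{p^{p^a}}^{+}$, the prime-order kernel to be generated by an element of Zsigmondy-prime order $r$ in $\ff^{\ast}$, and the $p^{a}$-complement to be the Galois group; the required centraliser facts then drop out of explicit conjugation formulae and the Zsigmondy property. Your version is more abstract and slightly more flexible: you allow any prime $q\equiv 1\md{p^{a}}$ (whose existence you are tacitly invoking via Dirichlet) and manufacture $V$ as an induced module, so you avoid Zsigmondy's theorem at the cost of a short module-theoretic verification. Either route yields the same structural picture, and your $S_4$ sanity check is exactly the smallest instance of both constructions.
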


The theorem of Iranzo, Medina, and P\'{e}rez Monasor stated at the beginning (\cite[Theorem 0.1]{arithmetical_questions}) holds unconditionally for $\pi$-separable groups, but an analogous result, saying that the number $n_C(G,X)$ of Carter subgroups of a soluble group $G$ that contain a given nilpotent subgroup $X \leq G$ divides the number $\card{\cart(G)}$ of Carter subgroups, does not hold in general (see \autoref{Thm:CountrexampleDivisibilityCart}).  
However, if $G$ has a normal Hall $q'$-subgroup (for some prime divisor $q$ of $|G|$) then one can obtain a simple formula for $n_C(G,X)$, see \autoref{Thm:EOnePrime}.  It follows that the desired analogous result holds when $G$ has a Sylow tower, see \autoref{Cor:SylowTower}.

The organization of the paper is as follows. In Section~\ref{Sec:2} we first present the necessary background regarding nilpotent injectors and then we give the proofs of \autoref{Thm:A}, \autoref{Thm:B} and \autoref{Thm:injectorratio}. In Section~\ref{Sec:3} we consider Carter subgroups and we prove \autoref{Thm:CountrexampleDivisibilityCart} and \autoref{Thm:EOnePrime}.

All groups in this paper are assumed to be finite and we close this introduction by outlining some convenient notation.  For $n \in \mathbb{N}$, we will write $\varpi(n)$
for the set of prime numbers that divide $n$
and set $\varpi(G) \coloneqq \varpi(|G|)$ when $G$ is a group.
Unless we explicitly say otherwise, 
$X_p$ will stand for a Sylow $p$-subgroup of $X$. 
We will mainly use this notation when $X$ is nilpotent 
in which case $X_p$ is unique. Finally, we denote by $\mathbb{P}$ the set of all prime numbers.

\section{Nilpotent Injectors and M\"{o}bius Function}\label{Sec:2}

For a soluble group $G$, 
the authors in~\cite[Cor.~1.3]{arithmetical_questions} prove
that for $X$, $K$ subgroups of $G$
containing the Fitting subgroup $\fitt(G)$, with $X \leq K$ and $X$ nilpotent,
$n_\mathfrak{I}(K, X)$ divides $n_{\mathfrak{I}}(G)$
and in particular, if $\fitt(G) \leq K \leq G$ then
$n_{\mathfrak{I}}(K)$ divides $n_{\mathfrak{I}}(G)$.
They remark that both the assumption of solubility of $G$
and of the inclusion $\fitt(G) \leq K$ are necessary for
the conclusion to remain valid.

At the cost of being less precise,
\autoref{Thm:A} removes those two restrictions.
Before presenting the proof of~\autoref{Thm:A}, some preliminary remarks are in order.

Firstly, the modulus we are working with enjoys two key properties:
\begin{enumerate}[label={\upshape(\roman*)}]
\item\label{1} For each divisor $d$ of $(G : \fitt(G))$
we have $d \equiv 1 \md{m_G}$. 
In fact, $m_G$ is the greatest positive integer with that property.
\item\label{2} If $X \geq \fitt(G)$, then $m_G \mid m_X$.
\end{enumerate}

Secondly, nilpotent subgroups that contain the Fitting subgroup 
behave quite similarly to Sylow subgroups.
In particular, Theorem~2 in~\cite{mann_injectors} asserts the following:

Let $G$ be $\mathcal{N}$-constrained. 
Let $F = \fitt(G)$, $S$ be a nilpotent injector of $G$ 
and $H$ be a subgroup containing $F$.
\begin{enumerate}[label={\upshape(\alph*)}]
\item\label{alpha} $H$ is $\mathcal{N}$-constrained. 
Denote by $T$ a nilpotent injector of $H$.
\item\label{beta} $T = S_1 \cap H$ for some nilpotent injector $S_1$ of $G$.
\item\label{gamma} $S \cap H$ is contained in some nilpotent injector of $H$.
\item\label{delta} If $H \leq K$, then $T$ is contained in a nilpotent injector of $K$.
\item\label{epsilon} If $S \leq H$, then $S$ is a nilpotent injector of $H$.
\end{enumerate}

 The following remark is proved by Mann when he shows that injectors exist while proving \cite[Theorem 1]{mann_injectors}.

\begin{remark}[Mann]
\label{Rem:Mann}
Let $G$ be $\mathcal{N}$-constrained,  $F = \fitt(G)$ and  $F_{p'} = \oh^p(\fitt(G))$. If  $\C_p \coloneqq \cent_G(F_{p'})$ and $S_q \in \syl_q(\mathbf{C}(q))$ for $q , p $ distinct primes in $\varpi (G)$, then $\C_p$ is a normal subgroup of $G$ that also   normalises  $S_q$.
Furthermore,   $S_p$ is  the unique Sylow $p$-subgroup
of a nilpotent injector of $G$ and $S_p$ centralises $S_q$ for all  $p,q$ as above.  Finally,  for any  nilpotent subgroup $K$ of $G$ containing $F$, its
 Sylow $p$-subgroup $K_p$ 
centralises $F_{p'}$ and thus $K_p \leq \C_p$.     
\end{remark}

Before giving the proof of \autoref{Thm:A} we will need one extra piece of information regarding nilpotent  injectors that was already noted in \cite{arithmetical_questions}.

\begin{lemma}[Iranzo et al.]
\label{Lem:Product}
Let $G$ be an $\mathcal{N}$-constrained group, $I$ a nilpotent injector of $G$ and  $H \leq I$ a fixed nilpotent subgroup of $G$. 
For each $p \in \varpi(\fitt(G))$, 
let $H_p$ be the unique Sylow $p$-subgroup of $H$
and note that $H_p \leq I_p \leq \C_p = \cent_G(F_{p'})$.
Then
\[
n_\mathfrak{I}(G, H) = \prod_{p \in \varpi} n_p(\C_p, H_p),
\]
where 
for a fixed $p$-subgroup $X$ of $Y$, 
$n_p(Y, X)$ is the number of Sylow $p$-subgroups of $Y$ that contain $X$.
\end{lemma}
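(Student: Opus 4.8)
The strategy is to reduce the count $n_\mathfrak{I}(G,H)$ to a product over primes, by exploiting the fact that a nilpotent injector of $G$ is determined, Sylow-prime by Sylow-prime, inside the normal subgroups $\C_p = \cent_G(F_{p'})$ supplied by \autoref{Rem:Mann}. Since $H \leq I$ is nilpotent, it is the internal direct product $H = \prod_{p \in \varpi} H_p$ of its Sylow subgroups, and the remark tells us $H_p \leq I_p \leq \C_p$ for each $p \in \varpi(\fitt(G))$; for primes $p \notin \varpi(\fitt(G))$ one checks $H_p$ is trivial (as $H_p \leq I_p$ and $I_p \leq \fitt(G)$ there, forcing $H_p = 1$), so the product on the right-hand side is really over $\varpi(\fitt(G))$ and is well-defined. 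The key structural input is that a subgroup $J$ of $G$ is a nilpotent injector containing $H$ if and only if $J = \prod_{p} J_p$ where each $J_p$ is a Sylow $p$-subgroup of $\C_p$ containing $H_p$, with the extra compatibility that the $J_p$ centralise one another; by \autoref{Rem:Mann} this mutual centralisation is automatic, because the Sylow $p$-subgroup of \emph{any} nilpotent injector centralises the Sylow $q$-subgroup of any nilpotent injector for distinct $p,q$.

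First I would set up the correspondence precisely. Given a nilpotent injector $J$ of $G$, \autoref{Rem:Mann} says $J_p$ is the unique Sylow $p$-subgroup of $J$ and $J_p \in \syl_p(\C_p)$; conversely, given a choice of $S_p \in \syl_p(\C_p)$ for each $p$, the remark guarantees $S_p$ centralises $S_q$ for $p \neq q$, so $\prod_p S_p$ is a nilpotent subgroup of $G$, and one argues (using property~\ref{epsilon} or the characterisation of injectors, and the fact that all injectors are conjugate while $F_{p'}$-centralisers pin down the Sylow structure) that it is in fact a nilpotent injector. The delicate point here — and the step I expect to be the main obstacle — is verifying that this map is a genuine bijection between nilpotent injectors of $G$ and tuples $(S_p)_{p \in \varpi(\fitt(G))}$ with $S_p \in \syl_p(\C_p)$: injectivity is clear since $J = \prod_p J_p$, but surjectivity and well-definedness require knowing that no two distinct injectors share all their Sylow subgroups and that every such product assembles to an actual injector. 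This is essentially Mann's analysis in \cite{mann_injectors}, and I would cite it together with \autoref{Rem:Mann} rather than reprove it.

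Once the bijection is in hand, the containment $H \leq J$ translates, under the direct-product decompositions, into the conjunction of the containments $H_p \leq J_p$ for all $p$, since $H = \prod_p H_p$ and $J = \prod_p J_p$ and these are internal direct products over the same (disjoint) Sylow supports. Therefore the set $\inj(G,H)$ is in bijection with the product set $\prod_{p \in \varpi} \{ S_p \in \syl_p(\C_p) : H_p \leq S_p \}$, and counting cardinalities gives exactly
\[
n_\mathfrak{I}(G,H) = \prod_{p \in \varpi} \card{\{ S_p \in \syl_p(\C_p) : H_p \leq S_p \}} = \prod_{p \in \varpi} n_p(\C_p, H_p),
\]
which is the claimed formula. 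For primes $p$ not dividing $\card{\fitt(G)}$ the factor $n_p(\C_p, H_p)$ equals $1$ (the Sylow $p$-subgroup of an injector lies in $\fitt(G)$, hence is trivial, and $H_p$ is trivial too), so including or excluding such primes from the product is harmless, consistent with the statement ranging over all of $\varpi$.
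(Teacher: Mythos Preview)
Your proposal is correct and supplies the argument that the paper omits: the paper's own proof consists solely of the sentence ``This follows from the first formula in the proof of Corollary~1.3 in~\cite{arithmetical_questions},'' so you are reconstructing precisely the reasoning behind that citation. Your bijection between $\inj(G,H)$ and $\prod_{p}\{S_p \in \syl_p(\C_p) : H_p \leq S_p\}$ is the right mechanism, and the surjectivity step you flag as delicate follows cleanly from Mann's characterisation of nilpotent injectors (in an $\mathcal{N}$-constrained group) as the maximal nilpotent subgroups containing $\fitt(G)$: any product $\prod_p S_p$ with $S_p \in \syl_p(\C_p)$ is nilpotent (pairwise centralisation, as you argue), contains $F$, and has the same order as $I$, hence must equal some injector.
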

\begin{proofof}
This follows from the first formula in the proof of Corollary~1.3 in~\cite{arithmetical_questions}.  
\end{proofof}

We can now prove our first Theorem.

\begin{proofofA}
Suppose that the claim is false and let $G$ be a counterexample.
Among subgroups for which $G$ fails to satisfy the conclusion of the theorem,
let $H$ have maximum possible order, 
so that $n_\mathfrak{I}(G, H) \neq 0$ 
and $n_\mathfrak{I}(G, H) \not\equiv 1\md{m_G}$.
In particular, $H$ is necessarily nilpotent and we may assume that $H \leq I$ 
for a fixed nilpotent injector $I$ of $G$. 
We argue that $n_\mathfrak{I}(G, H) = n_\mathfrak{I}(G, H \fitt(G))$
since every nilpotent injector of $G$ contains $\fitt(G)$.
Thus, by the extremal choice of $H$, we see that $H \geq \fitt(G)$.

Let $\varpi = \varpi(G)$ be the set of primes that divide $\card{G}$ and write 
$\varpi= \pi_1 \cup \pi_2 \cup \alpha$ with 
\begin{align*}
\pi_1  &= \{ p \in \varpi  : F_p \in \syl_p(G) \}\\
\pi_2  &= \{ p \in \varpi : I_p \in \syl_p(G), I_p > F_p \}\\
\alpha &= \{ p \in \varpi : p \mid (G:I)\}.  
\end{align*}
Observe that the above sets may be empty, 
but if $p \in \pi_1$ then $p \nmid (G : F)$  
and if $q \in \pi_2$ then $q \nmid (G : I)$ and $q \mid (G : F)$. 
Finally, $m_G =  \gcd\left\{ p - 1 : p \in \alpha \right\}$. 
		
Now let $\varpi(\fitt(G))$ be the set of primes that divide $\card{\fitt(G)}$ 
and note that 
\[
\varpi(\fitt(G))= \pi_1 \cup \pi_2 \cup \pi_3
\]
with $\pi_3 \subseteq \alpha$ 
(by the preliminary remarks we have $\varpi(\fitt(G)) = \varpi(I)$). 
So 
\[
\fitt(G) = F_{\pi_1} \times F_{\pi_2} \times F_{\pi_3}.
\]
		
For each $p \in \varpi(\fitt(G))$, 
let $H_p$ be the unique Sylow $p$-subgroup of $H$. Then according to \autoref{Lem:Product} we have  $H_p \leq I_p \leq \C_p = \cent_G(F_{p'})$ and 
\[
n_\mathfrak{I}(G, H) = \prod_{p \in \varpi} n_p(\C_p, H_p),
\]
where  $n_p(\C_p, H_p)$ is the number of Sylow $p$-subgroups of $\C_p$ that contain $H_p$.
		
Assume  first that  $p \in \pi_1$. 
Then $n_p(\C_p, H_p)= 1$ 
since $\C_p$ contains the unique Sylow $p$-subgroup of $G$
(whose uniqueness persists in $\C_p$).
		
Now for each $p \in \pi_2 \cup \pi_3$, 
let $\beta = (\pi_1 \cup \pi_2 ) \setminus \{ p\}$ 
and $I_{\beta}= K$ be the Hall $\beta$-subgroup of $I$. 
Then 
\[
K = \prod_{q \in \pi_1} F_q \times \prod_{q \in \pi_2 \setminus \{p\} } I_q
\]
and $K$ is moreover a Hall $\beta$-subgroup of $G$.
		
Let  $Q = \C_p \cap K$ and observe that 
$Q$ is a Hall $\beta$-subgroup of $\C_p$ as $\C_p \unlhd G$. 
By the construction of $\C_p$ (see \autoref{Rem:Mann}) 
we get that  $\C_p$ normalises $I_q$ for all $q \neq p$ and thus $\C_p$ normalises $Q$.	
So the Schur--Zassenhaus theorem ensures 
that there exists a subgroup $T$ 
such that $\C_p = Q \rtimes T$ with $\varpi(T) \subseteq \alpha \cup \{p\}$. 
Without loss 
(as all such $T$ form a $\C_p$-conjugacy class) 
we may assume that $H_p \leq I_p \leq T$. 
Now,  $I_p$ is a Sylow $p$-subgroup of $\C_p$ that centralises $Q$. 
Hence 
\[
n_p(\C_p, H_p)   = \card{\{ I_p^g : H_p \leq I_p^g,\, g \in \C_p \}} 
                = \card{\{ I_p^t : H_p \leq I_p^t,\, t \in T \}}
                = n_p(T, H_p). 
\]
But 
\[
n_p(T, H_p) \equiv 1 \md{m_p}
\]
by Corollary~8 in~\cite{congruences},
where 
\[
m_p = \gcd\{q-1 : q \mid \card{T},\, q \neq p\}.
\]
As $\varpi(T) \setminus \{p\} \subseteq \alpha $, 
we get  $m_G \mid m_p$. 
Hence  $n_\mathfrak{I}(\C_p, H_p ) \equiv 1\md{m_G}$ 
for all $p \in \pi_2 \cup  \pi_3$ and so  $n_\mathfrak{I}(G, H) \equiv 1\md{m_G}$, 
against our initial assumption.
This contradiction shows that the original claim is valid and thus our proof is complete.	
\end{proofofA}

We wish to present applications of \autoref{Thm:A} 
that are not covered by Corollary~1.3 in~\cite{arithmetical_questions}
or Corollary~8 in~\cite{congruences}.

\begin{example}
Let $R$ be an $\mathcal{N}$-constrained insoluble group of even order such that $\fitt(R) = \mathbf{O}_2(R)$, 
let $S$ be a nontrivial group of odd order, and set $G = R \times S$.
Since $S$ is itself $\mathcal{N}$-constrained, being soluble,
we see that $G$ is $\mathcal{N}$-constrained.
Now observe that $\fitt(G) = \fitt(R) \times \fitt(S)$
and that the nilpotent injectors of $R$ are precisely its Sylow $2$-subgroups.
It follows that a nilpotent injector $I$ of $G$ 
(which exists owing to the fact that $G$ is $\mathcal{N}$-constrained)
contains a full Sylow $2$-subgroup of $R$ and thus has odd index in $G$.
Since $p-1$ is even for all primes $p$ such that $p \mid (G : I)$,
we see that $2 \mid m_G$ and so we may conclude that 
for all subgroups $H$ of $I$, $n_\mathfrak{I}(G, H)$ is odd.
An example of a group $R$ with the stated properties 
is $\mathrm{AGL}_n(2)$ with $n > 2$.
\end{example}

Given a group $G$ and a poset $\mathcal{X}$ of subgroups of $G$,
we define the function $\theta_{\mathcal{X}}^G : \mathcal{X} \rightarrow \mathbb{Z}$ by
\[
\theta_{\mathcal{X}}^G(H) = \sum_{K \in \mathcal{X}} \mu_{\mathcal{X}}(H, K) 
                        = \sum_{H \leq K \in \mathcal{X}} \mu_{\mathcal{X}}(H, K)\,.
\]
In the case $G$ is clearly understood, we write $\theta_{\mathcal{X}}^G(H) = \theta_{\mathcal{X}}(H) $. 
A consequence of a result of Brown~\cite{pfractional} is that 
if $\mathcal{X}$ is the poset of all $p$-subgroups of $G$,
where $p$ is some fixed prime divisor of $\card{G}$,
then $\card{G}_p$ divides $\theta_{\mathcal{X}}(1)$,
where $\card{G}_p$ is the order of a Sylow $p$-subgroup of $G$.
In fact, Brown's theorem was subsequently further generalised by Hawkes, Isaacs, and \"Ozaydin in~\cite[Thm.~5.1]{mobius}.

We wish to provide an analogue of Brown's theorem 
for subgroups contained in at least one nilpotent injector of $G$.
So, let 
\[
\mathcal{X} = \{H: H \leq I\,\, \mbox{ for at least one }\, I \in \inj(G)\},
\]
where as before $I$ is some fixed nilpotent injector of $G$
(the implicit assumption here is that $G$ is non-trivial and $\mathcal{N}$-constrained).
Then $\theta_{\mathcal{X}}(1) = 0$ by Lemma~2.4 in~\cite{mobius} 
since $\mathrm{Core}_G(I) = \fitt(G) > 1$.
However, the situation becomes more interesting 
by restricting to the \enquote{top part} of $\mathcal{X}$, 
i.e. by focussing on
\[
\mathcal{Y}= \{H \in \mathcal{X} : \fitt(G) \leq H\}\,. 
\]
Observe that $\theta_{\mathcal{X}}(\fitt(G))= \theta_{\mathcal{Y}}(\fitt(G))$ because 
$\mu_{\mathcal{X}}(\fitt(G), K) = 0 $ for all $K \notin \mathcal{Y}$. 
We are now ready to give the proof of  \autoref{Thm:B}.

\begin{proofofB}
As before, let $\varpi = \varpi(\fitt(G))$ 
and write $F$ in place of $\fitt(G)$.
Let $\C_p = \cent_G(F_{p'})$ and $K$ any nilpotent subgroup of $G$ containing $F$. In view of \autoref{Rem:Mann} we get $K_p \leq \C_p$ and thus   the poset of all $p$-subgroups of $\C_p$ containing $F_p$ 
coincides with the poset of all $p$-subgroups of $\mathcal{X}$ containing $F_p $. 

We denote the above poset by $\mathcal{Y}_p$, that is 
\[
\mathcal{Y}_p = \{ U \in \mathcal{X} : F_p \leq U \text{ is a $p$-group} \} = \{ F_p \leq U \leq \C_p : U \text{ is a $p$-group}\}.  
\]
Now, if $F=\fitt(G) \leq K \in \mathcal{X}$ 
then $K = \prod_{p \in \varpi} K_p$ 
with $K_p \in \mathcal{Y}_p$ and thus 
\[
\mu_{\mathcal{X}}(F, K) = \prod_{p \in \varpi} \mu_{\mathcal{X}}(F_p, K_p)\,,
\]
by Theorem~8.4 in~\cite{mobius}.
Next, notice that any selection $\{ K_p \in \mathcal{Y}_p : p \in \varpi \}$ 
defines a nilpotent subgroup $K = \prod_{p \in \varpi} K_p$ of $G$ above $F$. 
Hence $K$ is contained in a nilpotent injector of $G$ and so $K \in \mathcal{X}$. 
We conclude that 
\begin{equation}
\theta_{\mathcal{X}}(F) =  \sum_{F \leq K \in \mathcal{X}} \mu_{\mathcal{X}}(F, K) 
                        = \sum_{F \leq K \in \mathcal{X}}
                           \prod_{p \in \varpi} \mu_{\mathcal{X}}(F_p, K_p) 
                        = \prod_{p \in \varpi} \sum_{U \in \mathcal{Y}_p} \mu_{\mathcal{X}}(F_p, U)\,.
\end{equation}
Applying Corollary~3.9 in~\cite{mobius} 
with $\C_p$ in place of $G$
and $F_p$ in place of $H$ yields that 
\[
(\C_p : F_p)_p = (I : F)_p \quad\quad \mbox{divides} \quad \sum_{U \in \mathcal{Y}_p} \mu_{\mathcal{X}}(F_p, U).
\]
It follows that
$\, (I : F) = \prod_{p \in \varpi} (I : F)_p\, $  divides 
$\, \prod_{p \in \varpi} \sum_{U \in \mathcal{Y}_p} \mu_{\mathcal{X}}(F_p, U)
= \theta_{\mathcal{X}}(F)\, , $
completing the proof.
\end{proofofB}

We mention that \autoref{Thm:B} is not true if $I$
is an arbitrary nilpotent subgroup of $G$ 
(with $\fitt(G)$ being naturally replaced by $\mathrm{Core}_G(I)$)
nor is it true if $I$ is specifically the dual of a nilpotent injector,
i.e. a Carter subgroup of $G$.
Concretely, the group \texttt{SmallGroup(54,5)} has a core-free
Carter subgroup $C$ that is cyclic of order $6$. 
But for $\mathcal{X} = \{1 \leq H \leq C^g : g \in G\}$ we get 
$(C : \mathrm{Core}_G(C)) = 6$ and  $\theta_{\mathcal{X}}(1) = -2$. 
Moreover, in the context of~\autoref{Thm:B} it is \textbf{not} true
that $\card{I}$ divides $\theta_{\mathcal{X}}(1)$.
This fails to be the case in $G = S_3 \times S_4$ wherein $I \cong C_3 \times D_8$,
but $\theta_{\mathcal{X}}(1) = -12$.

We now present the proof of \autoref{Thm:injectorratio}.

\begin{proofofC}
Given a prime $p$ and prime power $q = p^n$, 
we consider the affine general semi-linear group $A \coloneqq \mathrm{A\Gamma L}_1(q)$, 
whose elements are triples $(\tau, a, b)$ 
with $\tau \in \aut(\ff_q)$, $a \in \ff_q^\ast$, and $b \in \ff_q$. 
The group structure is determined by the action on the set $\ff_q$ 
in which $(\tau, a, b)$ maps $x$ to $a\tau(x) + b$. 
Writing $\iota$ for the identity map on $\ff_q$, 
we consider three subgroups of $A$:
\begin{itemize}
\item $J \coloneqq \{(\tau, 1, 0) : \tau \in \aut(\ff_q)\}$;
\item $M \coloneqq \{(\iota, a, 0) :a \in \ff_q^\ast\}$; and
\item $V \coloneqq \{(\iota, 1, b) : b \in \ff_q\}$.
\end{itemize}
We observe that $J$ is cyclic of order $n$, 
$M$ is cyclic of order $p^n-1$, 
and $V$ is elementary abelian of order $p^n$.  
It is straightforward to confirm the conjugation rules
\begin{equation}\label{con1}
(\iota, 1, b)^{(\iota, a, 0)} = (\iota, 1, ab)
\end{equation}
and
\begin{equation}\label{con2}
(\iota, a, b)^{(\tau, 1, 0)} = (\iota, \tau(a), \tau(b)).
\end{equation}
It follows that $M$ normalises $V$ and $J$ normalises $MV$, 
hence $A$ is soluble.

Now we restrict ourselves to the case where $n$ is a power of $p$, $n = p^a$, 
and consider a subgroup of $A$. 
Let $r$ be a Zsigmondy prime for the pair $(p, n)$. 
So, $r$ divides $p^n-1$ but does not divide $p^j-1$ for $1 \leq j < n$. 
Such an $r$ exists by Zsigmondy's Theorem, see~\cite{Zsigmondy}. 
Let $R \leq M$ have order $r$. 
As $R$ is characteristic in $M$, $J$ normalises $R$ and so normalises $RV$. 
We form the group $G \coloneqq JRV \leq A$ and observe that $|G| = p^{a+n}r$.

By~\eqref{con1}, $\cent_V(R) = 1$. 
Since $\N_V(R) = \cent_V(R)$, it follows that $\N_G(R) = JR$.
We claim that $\cent_J(R) = 1$, 
from which it follows that $\cent_G(R) = R$. 
For each $\tau \in J$, the set $\{x \in \ff_q : \tau(x) = x\}$ is a subfield of $\ff_q$, which is proper unless $\tau = 1$. 
Since $r$ is a Zsigmondy prime for $(p, n)$, no proper subfield of $\ff_q$ contains an element of multiplicative order $r$. 
The claim now follows from~\eqref{con2}.

We see now that every nilpotent subgroup of $G$ either has order $r$ or is a $p$-subgroup.  
Let $\SS^+_p(G)$ be the poset of $p$-subgroups of $G$ (including $1$).  
Since $\oh_p(G) = V \neq 1$, we see that
\[
\sum_{P \in \SS^+_p(G)}\mu(1,P) = 0
\]
(see~\cite[Prop.~3.8.6]{StanleyEC1} and~\cite[Prop.~2.4]{Quillen}).  
As $\mu(1,R) = -1$, we conclude that
\begin{equation}\label{nilsum}
\NS = - \card{\syl_r(G)} = - (G : \N_G(R)) = -p^n.
\end{equation}
We claim that $I \coloneqq JV$ is a nilpotent injector in $G$.  
The only maximal nilpotent subgroups of $G$ 
are the Sylow $p$-subgroups and the Sylow $r$-subgroups.  
A Sylow $r$-subgroup cannot be a nilpotent injector, 
since its intersection with the subnormal subgroup $V$ 
is not a maximal nilpotent subgroup of $V$.  
As $G$ is soluble and hence contains a nilpotent injector 
and $I$ is a Sylow $p$-subgroup, the claim follows.  
We conclude now that
\[
|I| / \NS = -p^{a+n}/p^n = -p^a,
\]
completing the proof.
\end{proofofC}

Say $p_1$ and $p_2$ are distinct primes and we construct $G_1$ and $G_2$ of respective orders $p_1^{a_1+n_1}r_1$ and $p_2^{a_2+n_2}r_2$ as in the proof of \autoref{Thm:injectorratio}.  
If $r_1 \neq r_2$ then $\NN(G_1 \times G_2) \cong \NN(G_1) \times \NN(G_2)$, hence 
\[
\theta_\NN^{G_1 \times G_2}(1)=\theta_\NN^{G_1}(1) \theta_\NN^{G_2}(1)=p_1^{n_1}p_2^{n_2}
\]
by~\cite[Prop. 3.8.2]{StanleyEC1} and~\eqref{nilsum}.  
One can show that if $I_1$ and $I_2$ are, respectively, nilpotent injectors in $G_1$ and $G_2$, then $I_1 \times I_2$ is a nilpotent injector $G_1 \times G_2$ having order $p_1^{a_1+n_1}p_2^{a_2+n_2}$.  
Now
\[
\card{I_1 \times I_2} / \theta_\NN^{G_1 \times G_2}(1) = p_1^{a_1}p_2^{a_2}.
\]
One can repeat this product construction 
as long as one is not forced to choose a Zsigmondy prime that has been chosen already, 
thereby producing integers $k = p_1^{a_1}\ldots p_t^{a_t}$, with $t$ arbitrarily large, 
such that there is a soluble group $G_k$ with nilpotent injector $I_k$ satisfying
\[
\card{I_k} / \card{\theta_\NN^{G_k}(1)} = k.
\]
This leads to the following question.

\begin{question}
Is it true that for every positive integer $k$ 
there is a soluble group $G$ with nilpotent injector $I$ satisfying
\[
\card{I} / \card{\NS} = k?
\]
\end{question}

Suppose that $\mathcal{X}$ is a finite poset and write $\mathrm{Max}(\mathcal{X})$ for the set of maximal elements of $\mathcal{X}$. 
If $x \in \mathcal{X}$ then write $\nu_{\mathcal{X}}(x)$ 
for the number of maximal elements of $\mathcal{X}$ lying above $x$, so that
\[
\nu_{\mathcal{X}}(x) = \card{\{ z \in \mathrm{Max}(\mathcal{X}) : x \leq z\}}.
\]
Now if 
\[
\delta_{\mathcal{X}}(y) = \left\{\begin{array}{l}
1, \text { if } y \in \mathrm{Max}(\mathcal{X})\\
0, \text { otherwise }
\end{array}\right.
\]
then  
\[
\nu_{\mathcal{X}}(x) = \sum_{x \leq y  \in \mathcal{X}} \delta_{\mathcal{X}}(y).
\]
Hence, by M\"{o}bius inversion we get
\[
\delta_{\mathcal{X}}(x)= \sum_{x \leq y \in \mathcal{X}}\mu_{\mathcal{X}}(x, y) \nu_{\mathcal{X}}(y).
\]
Thus if $x \notin \mathrm{Max}(\mathcal{X})$ then 
\[
\sum_{x \leq y \in \mathcal{X}}\mu_{\mathcal{X}}(x, y) \nu_{\mathcal{X}}(y) = 0\,.
\]
So if $\nu_{\mathcal{X}}(y) \equiv 1 \md{m}$ for some modulus $m$ and for all $y \in \mathcal{X}$, 
then the assumption $x \notin \mathrm{Max}(\mathcal{X})$ implies that
\begin{equation}\label{Eq:mdividestheta}
\sum_{x \leq y \in \mathcal{X} }\mu_{\mathcal{X}}(x, y) \equiv 0 \md{m}.    
\end{equation}

Below we deduce some consequences  of~\eqref{Eq:mdividestheta}.

\begin{corollary}\label{Cor:First}
Let $G$ be a group and write $\mathcal{X}$ for the poset of all $p$-subgroups of $G$
(together with the trivial subgroup).
Let $\card{G}_p$ be the $p$-part of $|G|$ 
and 
\[
m_p = \gcd\{q - 1 : q \mid \card{G},\,\,q \in \mathbb{P}\setminus \{p\}\}.
\]
Then $\mathrm{lcm}\{\card{G}_p, m_p\}$ divides $\theta_{\mathcal{X}}(1)$.
\end{corollary}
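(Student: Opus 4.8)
The plan is to combine the two known divisibility facts that feed into the general congruence~\eqref{Eq:mdividestheta}. First I would recall the classical theorem of Brown (as generalised by Hawkes--Isaacs--\"Ozaydin) that $\card{G}_p \mid \theta_{\mathcal{X}}(1)$ when $\mathcal{X}$ is the poset of all $p$-subgroups of $G$ (including $1$); this is stated in the text just before~\autoref{Thm:B}. So half of the claim is immediate, and everything reduces to showing that $m_p \mid \theta_{\mathcal{X}}(1)$ as well, since the divisibilities by two coprime-or-not integers $\card{G}_p$ and $m_p$ together give divisibility by their least common multiple.

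For the $m_p$ part I would apply the abstract setup of~\eqref{Eq:mdividestheta} directly with $x = 1$. The hypothesis needed is that $\nu_{\mathcal{X}}(y) \equiv 1 \md{m_p}$ for every $y \in \mathcal{X}$, where $\nu_{\mathcal{X}}(y)$ counts the maximal elements of $\mathcal{X}$ above $y$. Now the maximal elements of the poset of $p$-subgroups of $G$ are exactly the Sylow $p$-subgroups, so for a $p$-subgroup $y$ we have $\nu_{\mathcal{X}}(y) = n_p(G, y)$, the number of Sylow $p$-subgroups of $G$ containing $y$. By Corollary~8 in~\cite{congruences} (invoked in the same form during the proof of~\autoref{Thm:A}), $n_p(G, y) \equiv 1 \md{m_p}$ with $m_p = \gcd\{q - 1 : q \mid \card{G},\, q \in \mathbb{P}\setminus\{p\}\}$, which is precisely the modulus in the statement. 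Hence~\eqref{Eq:mdividestheta} with $x = 1 \notin \mathrm{Max}(\mathcal{X})$ (valid as long as $G$ is a nontrivial $p$-group's complement has a Sylow $p$-subgroup properly containing $1$; if $G$ is trivial the statement is vacuous and if $\card{G}_p = \card{G}$ then $m_p$ is an empty gcd, conventionally $0$, and one checks the statement degenerates appropriately) yields $m_p \mid \theta_{\mathcal{X}}(1)$.

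Finally I would assemble the two conclusions: $\card{G}_p \mid \theta_{\mathcal{X}}(1)$ and $m_p \mid \theta_{\mathcal{X}}(1)$ force $\mathrm{lcm}\{\card{G}_p, m_p\} \mid \theta_{\mathcal{X}}(1)$, which is the assertion. The only real subtlety — and the step I would be most careful about — is the boundary bookkeeping: making sure that $1$ is genuinely not maximal in $\mathcal{X}$ (i.e. that $p \mid \card{G}$, so there is a nontrivial $p$-subgroup) so that the vanishing~\eqref{Eq:mdividestheta} applies, and handling the convention for $m_p$ when $G$ has $p$ as its only prime divisor. Everything else is a direct citation of Brown's theorem and of Corollary~8 in~\cite{congruences}, together with the elementary poset identity already derived in the text.
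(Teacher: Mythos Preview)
Your proposal is correct and follows exactly the paper's own proof: invoke Brown's theorem for the $\card{G}_p$ divisibility, and feed Corollary~8 of~\cite{congruences} (giving $n_p(G,y)\equiv 1 \md{m_p}$ for every $p$-subgroup $y$) into~\eqref{Eq:mdividestheta} with $x=1$ to obtain the $m_p$ divisibility. Your extra care about the boundary cases ($p\nmid\card{G}$, or $G$ a $p$-group so that the gcd defining $m_p$ is empty) is more than the paper spells out, but the core argument is identical.
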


\begin{proofof}
That $\card{G}_p$ divides $\theta_{\mathcal{X}}(1)$ is a consequence of Brown's theorem
and $m_p$ divides $\theta_{\mathcal{X}}(1)$ 
by Corollary~8 in~\cite{congruences} and~\eqref{Eq:mdividestheta} with $m = m_p$
and $x$ the trivial subgroup of $G$.
\end{proofof}

\begin{corollary}\label{Cor:Second}
Let $G$ be a $\pi$-separable group and  $\mathcal{X}$  the set of all $\pi$-subgroups of G. If  $K$  is a Hall $\pi$-subgroup of $G$ let $n = (G : \N_G(K))$ and $m = \gcd\{q - 1 : q \mid n,\,\, q \in \mathbb{P}\}$.
Then $\mathrm{lcm}\{|K|, m\}$ divides $\theta_{\mathcal{X}}(1)$.
\end{corollary}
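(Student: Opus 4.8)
The plan is to mimic the proof of \autoref{Cor:First}, replacing Brown's theorem and the single prime $p$ with the Iranzo--Medina--P\'erez Monasor divisibility result for Hall $\pi$-subgroups and the set $\pi$. Set $x = 1$, the trivial subgroup of $G$, in the poset $\mathcal{X}$ of all $\pi$-subgroups of $G$. We must show two things: that $|K|$ divides $\theta_{\mathcal{X}}(1)$, and that $m$ divides $\theta_{\mathcal{X}}(1)$. Since $\gcd(|K|,m)$ need not be $1$ in general, the conclusion that the least common multiple divides $\theta_{\mathcal{X}}(1)$ follows once both individual divisibilities are established.

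For the divisibility by $m$, I would invoke \eqref{Eq:mdividestheta} with $x = 1$ and modulus $m$. The hypothesis needed there is that $\nu_{\mathcal{X}}(y) \equiv 1 \md{m}$ for every $y \in \mathcal{X}$; here $\nu_{\mathcal{X}}(y)$ counts the maximal elements of $\mathcal{X}$ above $y$, which are exactly the Hall $\pi$-subgroups of $G$ containing the $\pi$-subgroup $y$, i.e. $\nu_{\mathcal{X}}(y) = \nu_\pi(G, y)$ in the notation of the introduction. By the theorem of Iranzo, Medina, and P\'erez Monasor (applied with $K = G$, or rather the congruence form one extracts from it, analogous to how Corollary~8 of~\cite{congruences} is used in \autoref{Cor:First}), one has $\nu_\pi(G, y) \equiv 1 \md{m_0}$ where $m_0 = \gcd\{q-1 : q \mid \nu_\pi(G,1)\}$. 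Since $n = (G : \N_G(K)) = \nu_\pi(G,1)$ (all Hall $\pi$-subgroups are conjugate in a $\pi$-separable group), we have $m = m_0$, and $1 \notin \mathrm{Max}(\mathcal{X})$ because $G$ has a nontrivial $\pi$-subgroup whenever $n > 1$ — and if $n = 1$ then $m$ is an empty gcd, conventionally $0$, and the statement reduces to $|K| \mid \theta_{\mathcal{X}}(1)$, which is handled below. So \eqref{Eq:mdividestheta} gives $m \mid \theta_{\mathcal{X}}(1)$.

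For the divisibility by $|K|$, the cleanest route is to observe that $\theta_{\mathcal{X}}(1) = \tilde{\chi}(\Delta \mathcal{X})$ up to sign and that this equals the reduced Euler characteristic of the poset of nontrivial $\pi$-subgroups of $G$; then cite the known fact — this is precisely the Hall $\pi$-subgroup analogue of Brown's theorem, due in this generality to Hawkes--Isaacs--\"Ozaydin~\cite[Thm.~5.1]{mobius} — that $|K|$ divides this Euler characteristic. Alternatively, and perhaps more in keeping with the paper's toolkit, one can derive it from \autoref{Thm:B}-style reasoning: after translating to the poset of $\pi$-subgroups above the trivial group one applies Lemma~2.4, Theorem~8.4, and Corollary~3.9 of~\cite{mobius} to $G$, factoring through the Hall structure. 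Either way this is a citation, not a computation.

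The main obstacle is bookkeeping around degenerate cases and making sure the two cited divisibility inputs genuinely apply to the poset $\mathcal{X}$ of all $\pi$-subgroups (not just, say, nilpotent ones): one must confirm that the maximal elements of $\mathcal{X}$ are exactly the Hall $\pi$-subgroups — true in a $\pi$-separable group, since every $\pi$-subgroup is contained in a Hall $\pi$-subgroup — and that these form a single conjugacy class so that $\nu_\pi(G,1) = (G : \N_G(K))$. With those two facts in hand the proof is just the assembly of \eqref{Eq:mdividestheta} with the Hall analogue of Brown's theorem, exactly parallel to \autoref{Cor:First}.
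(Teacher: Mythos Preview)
Your proposal is correct and follows essentially the same approach as the paper: cite \cite[Thm.~5.1]{mobius} (the Hawkes--Isaacs--\"Ozaydin Hall-$\pi$ analogue of Brown's theorem, via the development property for $\pi$-separable groups) for the $|K|$-divisibility, and combine the Iranzo--Medina--P\'erez Monasor theorem with \eqref{Eq:mdividestheta} for the $m$-divisibility. The only point the paper makes more explicit than you do is the passage from divisibility to congruence: from $\nu_\pi(G,y)\mid n$ one concludes $\nu_\pi(G,y)\equiv 1\md m$ simply because every prime divisor $q$ of $n$ satisfies $q\equiv 1\md m$, so every divisor of $n$ does too --- this is the ``congruence form one extracts'' that you gesture at.
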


\begin{proofof}
Certainly $\card{K}$ divides $\theta_{\mathcal{X}}(1)$
by Theorem 5.1 in \cite{mobius} and the fact that the development
property holds in $\pi$-separable groups.
Thus it suffices to justify that $m$ divides $\theta_{\mathcal{X}}(1)$ as well.
Now observe that for a fixed $\pi$-subgroup $X$ of $G$,
Theorem~0.1 in~\cite{arithmetical_questions} implies (taking $K = G$ in that theorem) 
that the number $n_{\pi}(G, X)$ of Hall $\pi$-subgroups of $G$ that contain $X$ divides $n$.
Then $n_{\pi}(G, X) \equiv 1 \md{m}$ 
since this congruence is satisfied by every divisor of $n$.
Thus $m$ divides $\theta_{\mathcal{X}}(1)$ by~\eqref{Eq:mdividestheta}, 
completing the proof.
\end{proofof}

\begin{corollary}\label{Cor:Third}
Let $G$ be a non-trivial $\mathcal{N}$-constrained group 
and write
\[
\mathcal{X} = \{1 \leq H \leq I^g : g \in G\},
\]
where $I$ is a nilpotent injector of $G$.
Let $n = (I : \fitt(G))$ and $m = m_G$,
where $m_G$ is as in \autoref{Thm:A}.
If $n > 1$, then $\mathrm{lcm}\{m,n\}$ divides $\theta_{\mathcal{X}}(\fitt(G))$.
\end{corollary}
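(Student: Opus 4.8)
The plan is to establish, separately, that $m = m_G$ and that $n = (I : \fitt(G))$ each divide $\theta_{\mathcal{X}}(\fitt(G))$; since $\mathrm{lcm}\{m,n\}$ divides an integer precisely when both $m$ and $n$ do, this will suffice. The hypothesis $n > 1$ will be used only in the argument for $m$.

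The divisibility by $n$ is immediate from \autoref{Thm:B}. Indeed, the poset $\mathcal{X} = \{1 \leq H \leq I^g : g \in G\}$ is exactly the poset of subgroups of $G$ contained in at least one conjugate of $I$, and $\theta_{\mathcal{X}}(\fitt(G)) = \sum_{K \in \mathcal{X}}\mu_{\mathcal{X}}(\fitt(G),K)$ is precisely the quantity appearing in \autoref{Thm:B}. Hence $(I : \fitt(G)) = n$ divides $\theta_{\mathcal{X}}(\fitt(G))$.

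For divisibility by $m = m_G$ I would invoke the M\"obius-inversion principle recorded in~\eqref{Eq:mdividestheta}, applied to the poset $\mathcal{X}$ with $x = \fitt(G)$ and modulus $m = m_G$. Two points must be checked. First, $\fitt(G) \notin \mathrm{Max}(\mathcal{X})$: applying the injector condition with the subnormal subgroup $S = G$ shows every nilpotent injector is a maximal nilpotent subgroup of $G$, so the maximal elements of $\mathcal{X}$ are exactly the conjugates of $I$; since $n = (I:\fitt(G)) > 1$ we have $\fitt(G) < I$, so $\fitt(G)$ is not maximal. Second, $\nu_{\mathcal{X}}(y) \equiv 1 \md{m_G}$ for every $y \in \mathcal{X}$: here $\nu_{\mathcal{X}}(y)$ counts the maximal elements of $\mathcal{X}$ above $y$, that is, the conjugates of $I$ containing $y$, and since the nilpotent injectors of $G$ form a single conjugacy class this number is exactly $n_{\mathfrak{I}}(G,y)$; as $y \in \mathcal{X}$ it lies in at least one injector, so $n_{\mathfrak{I}}(G,y) \neq 0$, and \autoref{Thm:A} gives $n_{\mathfrak{I}}(G,y) \equiv 1 \md{m_G}$. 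Then~\eqref{Eq:mdividestheta} yields $\theta_{\mathcal{X}}(\fitt(G)) \equiv 0 \md{m_G}$, i.e.\ $m_G \mid \theta_{\mathcal{X}}(\fitt(G))$.

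Combining the two divisibilities gives $\mathrm{lcm}\{m,n\} \mid \theta_{\mathcal{X}}(\fitt(G))$. There is no serious obstacle here: the corollary is a formal consequence of \autoref{Thm:A}, \autoref{Thm:B}, and the identity~\eqref{Eq:mdividestheta}. The only point needing (minor) care is the identification of $\nu_{\mathcal{X}}(y)$ with $n_{\mathfrak{I}}(G,y)$ for an arbitrary $y \in \mathcal{X}$ --- in particular for $y$ that need not contain $\fitt(G)$ --- which rests on the two facts, recalled in the introduction, that nilpotent injectors are maximal nilpotent subgroups and comprise a single conjugacy class.
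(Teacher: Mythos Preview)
Your proposal is correct and follows exactly the same approach as the paper's proof: divisibility by $n$ comes from \autoref{Thm:B}, and divisibility by $m_G$ comes from combining \autoref{Thm:A} with the general identity~\eqref{Eq:mdividestheta}. You have simply spelled out in more detail the verification of the hypotheses for~\eqref{Eq:mdividestheta} (that $\fitt(G)$ is not maximal in $\mathcal{X}$ when $n>1$, and that $\nu_{\mathcal{X}}(y)=n_{\mathfrak{I}}(G,y)\equiv 1\md{m_G}$ for every $y\in\mathcal{X}$), which the paper leaves implicit.
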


\begin{proofof}
Since $n$ divides $\theta_{\mathcal{X}}(\fitt(G))$ by \autoref{Thm:B},
it will suffice to show that $m$ divides $\theta_{\mathcal{X}}(\fitt(G))$.
But this is true for the specified poset $\mathcal{X}$ by virtue of~\eqref{Eq:mdividestheta}
and \autoref{Thm:A} with $m = m_G$.
\end{proofof}

The requirement $n > 1$ is essential as the Frobenius group $G$ of order $21$ 
has $m_G = 2$ and $\theta_{\mathcal{X}}(\fitt(G)) = 1$.

Assume now that $\FF$ is a Fitting class
i.e.,  $\FF$ is  a collection of groups that includes $1$ and is closed under isomorphisms, under normal subgroups, and further, under the operation of forming normal products. 
Let $G$ be  a soluble group and let $\inj_{\FF}(G)$ be the set of $\FF$-injectors of $G$. 
Since $G$ is soluble, $\inj_{\FF}(G)$ is not empty and forms a single conjugacy class of $G$. 
If $G_{\FF}$ is the $\FF$-radical of $G$, 
i.e. the product of all normal $\FF$-subgroups of $G$, 
then the following holds.

\begin{proposition}
If $\FF$ is a Fitting class  and  $G$ a soluble group then   
$\card{G_{\FF}}$ divides $\sum_{H \in \FF} \mu(1, H)$.
\end{proposition}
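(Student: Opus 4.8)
The plan is to read the alternating sum topologically and then run a standard transformation-group argument for the conjugation action of $G_{\FF}$, in the spirit of Brown's theorem. Write $\mathcal{X}$ for the poset, ordered by inclusion, of those subgroups of $G$ that belong to $\FF$, with least element $1$, and set $F\coloneqq G_{\FF}$. One may assume $1<F<G$: if $G\in\FF$ then $\mathcal{X}$ is the full subgroup lattice of $G$ and $\sum_{H\in\mathcal{X}}\mu_{\mathcal{X}}(1,H)=0$, while $F=1$ is vacuous. Since $F\trianglelefteq G$, conjugation makes $G$ act on the subposet $\mathcal{X}_{>1}$ of nontrivial $\FF$-subgroups, hence on its order complex $\Delta\coloneqq\Delta(\mathcal{X}_{>1})$, and by Philip Hall's theorem \cite[Thm.~3.8.5]{StanleyEC1} one has $\sum_{H\in\mathcal{X}}\mu_{\mathcal{X}}(1,H)=-\tilde{\chi}(\Delta)$. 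So it is enough to show that $|F|$ divides $\tilde{\chi}(\Delta)$.

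Next I would pass to reduced homology: view the $\tilde{H}_{i}(\Delta;\mathbb{C})$ as $\mathbb{C}G$-modules and form the virtual character $\psi\coloneqq\sum_{i}(-1)^{i}\bigl[\tilde{H}_{i}(\Delta;\mathbb{C})\bigr]$ of $G$. Because $\Delta$ is an order complex, any element fixing a chain fixes it pointwise, so the Hopf trace formula gives $\psi(g)=\tilde{\chi}(\Delta^{g})=\tilde{\chi}\bigl(\Delta(\mathcal{N}_{g})\bigr)$ for $g\in G$, where $\mathcal{N}_{g}\coloneqq\{1\neq H\leq G:H\in\FF,\ g\in\N_{G}(H)\}$; in particular $\psi(1)=\tilde{\chi}(\Delta)$. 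The point is that it now suffices to prove that $\psi$ vanishes on $F\setminus\{1\}$. Indeed, a virtual character of the finite group $F$ supported at the identity equals $\bigl(\psi(1)/|F|\bigr)\rho_{F}$, where $\rho_{F}$ is the regular character, and the coefficient $\psi(1)/|F|=\langle\psi|_{F},1_{F}\rangle$ is an integer because $\psi|_{F}$ is a genuine virtual character; hence $|F|$ divides $\psi(1)=\tilde{\chi}(\Delta)$.

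Everything therefore reduces to the local claim, which I expect to be the main obstacle: for each $1\neq x\in F=G_{\FF}$, the poset $\mathcal{N}_{x}$ of $x$-invariant nontrivial $\FF$-subgroups of $G$ has $\tilde{\chi}\bigl(\Delta(\mathcal{N}_{x})\bigr)=0$. The natural attack is to prove $\mathcal{N}_{x}$ contractible by producing an order-preserving self-map $H\mapsto H^{\ast}$ on $\mathcal{N}_{x}$ with $H\leq H^{\ast}$ whose image has a greatest element, built from the $\FF$-radical operation applied to $x$-invariant overgroups of $H$ — note that $H\trianglelefteq\langle H,x\rangle$, so $H$ lies in the $\FF$-radical of $\langle H,x\rangle$ — with the hypothesis $x\in G_{\FF}$ entering through the theory of $\FF$-injectors recalled before the statement (all of which contain $G_{\FF}$). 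The delicate point, where I would spend the effort, is that a Fitting class need not be closed under subgroups, so the naive closure need not be monotone and must be engineered with care; moreover a cone point need not exist (it does for nilpotent $\FF$ in $A_{4}$, but not in $S_{4}$), so one really needs contractibility or at least vanishing of $\tilde{\chi}$ by a finer argument. Alternatively one could try to bypass the point-set topology altogether: the assertion has exactly the shape of \cite[Thm.~5.1]{mobius} (compare \cite[Sections~5.3 and~5.4]{Smith}) for the conjugation-invariant family of $\FF$-subgroups of the soluble group $G$, with $G_{\FF}$ in the role of the $p$-radical, and one would then only need to verify the relevant development hypothesis — which is where solubility and injector theory come in.
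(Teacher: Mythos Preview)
Your reduction is exactly the paper's: let $G_\FF$ act by conjugation on the poset $\mathcal P$ of $\FF$-subgroups and show that for each $1\neq g\in G_\FF$ the fixed-point poset $\mathcal P^g=\mathcal N_g$ contributes zero. Your Hopf-trace packaging is precisely what the paper invokes as Lemma~9.2 of \cite{mobius}. So the frameworks coincide; the difference is that you stop at the local claim and speculate, whereas the paper settles it in one line.

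The step you are missing is that the ``naive'' map $K\mapsto \langle g\rangle K$ already lands in $\mathcal P^g$. For $K\in\mathcal P^g$ set $H\coloneqq\langle g\rangle K$; then $K\trianglelefteq H$ and $H\cap G_\FF\trianglelefteq H$, and the paper observes that both are $\FF$-subgroups, so $H=K\cdot(H\cap G_\FF)\in\FF$ by closure under normal products, whence $H\in\mathcal P^g$. Consequently every maximal element of $\mathcal P^g$ contains $\langle g\rangle$, and Lemma~2.4 of \cite{mobius} gives $\sum_{K\in\mathcal P^g}\mu(1,K)=0$. Two of your worries therefore dissolve: monotonicity is automatic because $\langle g,K\rangle$ is \emph{itself} in $\FF$ (no radical needed, so the map is simply $K\mapsto K\langle g\rangle$); and the $S_4$ remark is beside the point, since the contraction runs via $K\leq \langle g\rangle K\geq \langle g\rangle$, not to a greatest element. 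No injector theory is used. The one genuinely delicate verification---that $H\cap G_\FF\in\FF$---is precisely where your instinct about non--subgroup-closure is relevant; the paper asserts it without further comment, and you should check it rather than detour through $\FF$-radicals or \cite[Thm.~5.1]{mobius}.
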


\begin{proofof}
The $\FF$-radical $G_{\FF}$ acts by conjugation on the poset $\mathcal{P}$ of all $\FF$-subgroups of $G$. 
If $g \in G_{\FF}$ with $g \neq 1$, let 
\[
\mathcal{P}^g = \{K \in \mathcal{P} : K^g = K\}. 
\]
Note that if  $K \in \mathcal{P}^g$ then the group 
$H = \gensub{g}K$ is  the product of its normal $\FF$-subgroups $K$ and $H \cap G_{\FF}$  
and thus it is an $\FF$-subgroup  of  $G$ contained in $\mathcal{P}^g$. 
Hence any maximal element of $\mathcal{P}^g$ contains $\gensub{g}$. 
We conclude (see  Lemma~2.4 in~\cite{mobius}) that 
\[
\sum_{H \in \mathcal{P}^g} \mu(1,H) = 0,
\]
for every $1 \neq g \in G_{\FF}$. 
Now the proposition follows by Lemma~9.2 in~\cite{mobius}. 
\end{proofof}

As an immediate consequence of the proposition 
applied on the Fitting class $\NN = \NN(G)$ of all nilpotent subgroups of $G$ 
we get the following.

\begin{corollary}
If $G$  is a soluble group, 
then $\card{\fitt(G)}$ divides $\sum_{H \in \NN} \mu(1,H)$.
\end{corollary}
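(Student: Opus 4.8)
The plan is to obtain this as a direct specialisation of the preceding Proposition, applied to the class $\NN$ of all finite nilpotent groups. The first thing I would verify is that $\NN$ genuinely qualifies as a Fitting class in the sense defined above: it contains $1$ and is closed under isomorphisms, a normal subgroup of a nilpotent group is again nilpotent, and the product of two normal nilpotent subgroups of a group is nilpotent. This last closure property is exactly Fitting's theorem, and it is precisely the fact that makes the Fitting subgroup well defined in the first place.

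Second, I would identify the $\NN$-radical of the soluble group $G$. By definition the $\FF$-radical $G_{\FF}$ is the product of all normal $\FF$-subgroups of $G$; taking $\FF = \NN$, this is the product of all normal nilpotent subgroups of $G$, and by Fitting's theorem that product is itself a normal nilpotent subgroup, hence the unique largest one. Therefore $G_{\NN} = \fitt(G)$.

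Third, I would observe that the poset $\mathcal{P}$ of all $\NN$-subgroups of $G$ occurring in the Proposition is literally the poset $\NN = \NN(G)$ of nilpotent subgroups of $G$, so that $\sum_{H \in \mathcal{P}} \mu(1,H) = \sum_{H \in \NN} \mu(1,H)$. Substituting the two identifications $G_{\NN} = \fitt(G)$ and $\mathcal{P} = \NN$ into the conclusion of the Proposition — that $\card{G_{\FF}}$ divides $\sum_{H \in \mathcal{P}}\mu(1,H)$ — yields at once that $\card{\fitt(G)}$ divides $\sum_{H \in \NN}\mu(1,H)$.

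I do not anticipate any real obstacle: the whole argument reduces to recognising that $\NN$ is a Fitting class whose radical is $\fitt(G)$, after which the Proposition applies verbatim. The only point deserving a moment's attention is closure of $\NN$ under normal products, which is Fitting's theorem and is exactly the ingredient that gives the statement its meaning.
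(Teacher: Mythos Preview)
Your proposal is correct and matches the paper's approach exactly: the paper states the corollary as an immediate consequence of the preceding Proposition applied to the Fitting class $\NN$, and your write-up simply spells out the (routine) verifications that $\NN$ is a Fitting class with radical $\fitt(G)$.
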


\section{Carter subgroups}\label{Sec:3}

If $G$ is a soluble group and $X \leq G$ we will write $\cart(G, X)$ 
for the set of Carter subgroups of $G$ containing $X$ and $n_C(G,X) = \card{\cart(G, X)}$ 
for its  cardinality.
We recall that  Carter subgroups exist in solvable groups and they are all conjugate. Furthermore,  if $C$ is a Carter subgroup of $G$ 
then $f(C)$ is a Carter subgroup of $f(G)$ 
for any group homomorphism $f$ of $G$. 
In particular, if $N$ is a normal subgroup of $G$ then $CN/N$ is Carter in $G/N$. 
Further, if $H/N$ is Carter in $G/N$ and $K$ is Carter in $H$, then $K$ is Carter in $G$. The interested reader could look at Section 9.5 in \cite{Robinson1982ACI} for the proofs of the  aforementioned results.  

As we mentioned in the Introduction,
with no extra assumptions 
there is no analogue for Carter subgroups of the theorem
of Iranzo, Medina, and P\'{e}rez Monasor
and we prove this claim in our next result.

\begin{prevtheorem}\label{Thm:CountrexampleDivisibilityCart}
If $p$ is an odd prime then there exist a soluble group $G$, 
a Carter subgroup $H \leq G$, and $g \in G$ 
such that the largest power of $p$ dividing $(G : H)$ is $p^{2^{p-1}}$ and the number of Carter subgroups of $G$ containing $\langle g \rangle$ is divisible by $p^{2^{p-1}+1}$.
\end{prevtheorem}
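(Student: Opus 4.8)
The statement is purely existential, so a proof must exhibit, for each odd prime $p$, a soluble group $G$ together with a Carter subgroup $H$ and an element $g$, and then verify the two divisibility assertions by computation. The plan is to reduce that verification to a single counting identity, and then to engineer $G$ so that the identity returns the required exponents.

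First I would record, for any soluble group $G$ with a Carter subgroup $H$ and any $g\in G$, the identity
\[
n_C(G,\langle g\rangle)=\frac{|\cent_G(g)|\cdot|g^G\cap H|}{|H|},
\]
where $g^G$ denotes the conjugacy class of $g$. This follows from double counting the pairs $(C,x)$ with $C$ a Carter subgroup of $G$, $x\in C$ and $x\in g^G$: summing first over $C$ gives $n_C(G,1)\,|g^G\cap H|=(G:H)\,|g^G\cap H|$, because the Carter subgroups form a single self-normalising conjugacy class, while summing first over $x\in g^G$ gives $|g^G|\,n_C(G,\langle g\rangle)$; since $|g^G|=(G:\cent_G(g))$, the identity follows. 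Writing $v_p$ for the exponent of $p$ in an integer, this reduces both claims to controlling $|G|$, $|H|$, $|\cent_G(g)|$ and the fusion number $|g^G\cap H|$. Indeed, if $(G:H)$ has $p$-part exactly $p^{2^{p-1}}$, if $g$ is a $p$-element centralised by a full Sylow $p$-subgroup of $G$ (so $v_p(|\cent_G(g)|)=v_p(|G|)$), and if $p\mid|g^G\cap H|$, then
\[
v_p\bigl(n_C(G,\langle g\rangle)\bigr)=v_p(|\cent_G(g)|)+v_p(|g^G\cap H|)-v_p(|H|)\ \geq\ \bigl(v_p(|G|)-v_p(|H|)\bigr)+1=2^{p-1}+1,
\]
which is exactly the desired divisibility, while the first assertion is built in.

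Second, I would construct $G$ so that these three conditions hold. A convenient way to force $p\mid|g^G\cap H|$ is to take $g$ inside the Sylow $p$-subgroup $H_p$ of $H$ but not in $Z(H_p)$, with the $G$-class of $g$ disjoint from $Z(H_p)$: then every $H$-class met by $g^G$ inside $H_p$ has size divisible by $p$, since for such $h$ the group $\cent_H(h)$ contains no full Sylow $p$-subgroup of $H$, whence their sum $|g^G\cap H|$ is divisible by $p$. Concretely, I would fix a non-abelian $p$-group $P$ with a non-central element $g$ whose $G$-conjugates can be kept away from $Z(P)$ (for example the extraspecial group of order $p^{3}$ and exponent $p$, with $g$ a non-commutator generator), take $H$ to be a nilpotent subgroup with $H_p=P$, and realise everything inside a semidirect product $G=V\rtimes K$ in which $V$ is an $\mathbb{F}_p$-module of dimension $2^{p-1}$ — a natural candidate being the power-set module $\mathbb{F}_p[\mathcal{P}(\Omega)]$ for a set $\Omega$ of size $p-1$, whose dimension is $\sum_{k}\binom{p-1}{k}=2^{p-1}$ — and $K$ a soluble automorphism group of $V$ chosen so that a Sylow $p$-subgroup $S$ of $G$ satisfies $|S:H_p|=p^{2^{p-1}}$, so that $g$ lies in the centre of some Sylow $p$-subgroup of $G$, and so that $H$ is self-normalising; the Carter subgroups of $G$ would then be the conjugates of $H$.

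The principal obstacle is precisely this construction and, within it, verifying that $H$ really is self-normalising: the delicate failure mode is that the Carter subgroup tends to swallow the fixed-point space of $H_p$ acting on $V$, which would pull the $p$-part of $(G:H)$ below $p^{2^{p-1}}$, so the module $V$ and the embedding of $P$ must be tuned to block this while simultaneously making $g$ central in some Sylow $p$-subgroup of $G$ and keeping $g^G\cap Z(H_p)=\varnothing$. One must also check that $G$ is not $q$-nilpotent for any prime $q\mid|G|$, since otherwise $G$ has a normal Hall $q'$-subgroup and \autoref{Thm:EOnePrime} applies, giving a well-behaved formula for $n_C(G,\langle g\rangle)$ that would preclude the phenomenon. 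Once $G$, $H$ and $g$ are pinned down, what remains is the routine computation of $|G|$, $|H|$, $|\cent_G(g)|$ and $|g^G\cap H|$ and their substitution into the counting identity.
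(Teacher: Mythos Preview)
Your reduction is correct and matches the paper: the double-counting identity
\[
n_C(G,\langle g\rangle)=\frac{\card{\cent_G(g)}\,\card{g^G\cap H}}{\card{H}}
\]
is exactly what the paper uses, and your observation that it suffices to arrange that the $p$-part of $\card{\cent_G(g)}$ equal the $p$-part of $\card{G}$ together with $p\mid\card{g^G\cap H}$ is the same decomposition the paper exploits.

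The gap is that you do not actually construct $G$, $H$, $g$. You sketch ingredients (an extraspecial $H_p$ of order $p^3$, a ``power-set'' $\ff_p$-module of dimension $2^{p-1}$, an unspecified soluble $K$) and then, candidly, list the obstructions without resolving them. Nothing in the sketch pins down $K$, verifies that $H$ is self-normalising, or shows that the three conditions on $g$ are simultaneously realisable. Since the statement is purely existential, the construction \emph{is} the proof; the counting identity is the easy half.

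For comparison, the paper's construction is rather different from your sketch. It takes $A=\mathrm{A\Gamma L}_1(2^p)=JMV$ (with $\card{J}=p$, $\card{M}=2^p-1$, $\card{V}=2^p$) acting on its permutation module $W$ over $\ff_p$ with basis indexed by $\ff_{2^p}$, so $\dim W=2^p$ rather than $2^{p-1}$, and sets $G=AW$. The Carter subgroup is $H=J\,V_2\,\cent_W(V_2)$ where $V_2=\cent_V(J)$ has order $2$; a direct calculation shows $H$ is nilpotent of order $2p^{2^{p-1}+1}$ and self-normalising. The element is $g=e_X=\sum_{x\in X}e_x$ for $X$ any index-$2$ additive subgroup of $\ff_{2^p}$ containing $1_\ff$. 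The $G$-class of $e_X$ is identified with the set of all cosets of all index-$2$ subgroups of $\ff_{2^p}^+$, which yields $\card{\cent_G(e_X)}=2^{p-1}p^{2^p+1}$ (full $p$-part) and $\card{g^G\cap H}=2(2^{p-1}-1)$; the required divisibility $p\mid 2^{p-1}-1$ is Fermat's Little Theorem. So the mechanism forcing $p\mid\card{g^G\cap H}$ is purely arithmetic and does not pass through your orbit-length argument via $g\notin Z(H_p)$.

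One small correction to your final paragraph: the presence of a single normal Hall $q'$-subgroup does not, through \autoref{Thm:EOnePrime} alone, preclude the phenomenon. That theorem only gives a recursive formula; divisibility is deduced under the additional inductive hypothesis of \autoref{Cor:Asterisk}. So the check you describe is stated too strongly.
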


\begin{proofof}
Write $\ff$ for the field $\ff_{2^p}$ and, respectively, 
$\ff^+$ and $\ff^\ast$ for the additive and multiplicative groups of $\ff$.  
Set $A = \mathrm{A\Gamma L}_1(2^p)$.  
As in the proof of~\autoref{Thm:injectorratio}, we have $A = JMV$ with 
$J \cong \aut(\ff)$, $M \cong \ff^\ast$, and $V \cong \ff^+$.  
The group $A$ acts on the set $\ff$ as in the proof of~\autoref{Thm:injectorratio}.

Let $W$ be an $\ff_p$-vector space with basis $B \coloneqq \{e_x : x \in \ff\}$.  
The action of $A$ on $\ff$ determines a representation of $A$ on $W$ 
in which $A$ permutes $B$ as it permutes $\ff$.  
Using this representation, we form the semidirect product $G = AW$.  

Set $V_2 \coloneqq \cent_V(J)$.  
So, $|V_2| = 2$.  
Now set $H \coloneqq J V_2 \cent_W(V_2)$.  
Given $S \subseteq \ff$, set
\[
e_S \coloneqq \sum_{x \in S}e_x \in W.
\]
We will prove 
\begin{enumerate}[label={\upshape(\Alph*)}]
\item\label{item:A} $H$ is a Carter subgroup of $G$ having order $2p^{2^{p-1}+1}$, hence $(G : H) = 2^{p-1}(2^p-1)p^{2^{p-1}}$; and
\item\label{item:B} there exists $X \subseteq \ff$ such that 
the number of Carter subgroups of $G$ containing $\langle e_X \rangle$ 
is divisible by $p^{2^{p-1}+1}$,
\end{enumerate}
thereby proving the theorem.

First we prove~\ref{item:A}.  
Direct calculation shows that
\begin{equation}\label{Eq:gorder}
|G| = p(2^p-1)2^pp^{2^p}=p^{2^p+1}(2^p-1)2^p
\end{equation}
and 
\begin{equation}\label{Eq:horder}
|H| = |J||V_2||\cent_W(V_2)| = 2p|\cent_W(V_2)|.
\end{equation}
Now $V_2=\langle t \rangle$, where $t$ acts on $\ff$ by mapping $e_x$ to $e_{x+1_\ff}$ for each $x \in \ff$.  
(Here $1_\ff$ is the multiplicative identity.)  
Therefore,
\[
\cent_W(V_2) = 
\left\{\sum_{x \in \ff}\alpha_xe_x : \alpha_x \in \ff_p, \alpha_x = \alpha_{x+1}\,\, \mbox{ for all }\, x \right\}
\]
has order $p^{2^{p-1}}$.  
Direct calculation shows that $(G : H) = 2^{p-1}(2^p-1)p^{2^{p-1}}$, as claimed.  
We must show that $H$ is nilpotent and self-normalising in $G$.  
As $J$ centralises $V_2$ and normalises $W$, 
$J$ normalises $\cent_W(V_2)$.  
Therefore, $J \cent_W(V_2)$ is a Sylow $p$-subgroup of $H$ 
that centralises the Sylow $2$-subgroup $V_2$, hence $H$ is nilpotent.  

Now $\N_G(H) \leq \N_G(V_2) = \cent_G(V_2)$, 
and $\cent_G(V_2)/\cent_W(V_2)$ is isomorphic with a subgroup of $\cent_A(V_2) = JV$ (since $\cent_M(V_2) = 1)$.  
It follows that
\[
\cent_G(V_2) = JV\cent_W(V_2).
\]
Let $\tau$ generate $J$, with $\tau(x) = x^2$ for each $x \in \ff$.  
We will show that given $v \in V$, 
we have $\tau^v \in H$ if and only if $v \in V_2$, 
thereby completing the proof of~\ref{item:A}.  
We calculate 
(using in the last term addition in $\ff$ to denote multiplication in $V$ 
and $.$ to denote multiplication in $A$)
\[
\tau^v = v \tau v = \tau(v^\tau)v  =\tau.(v^2+v).
\]
It follows that $\tau^v \in H$ if and only if $v^2+v \in V_2$, 
that is, if and only if $v^2+v \in \{0_\ff,1_\ff\}$.  
If $v^2+v=0_\ff$ then $v \in V_2$.  
If $v^2+v=1_\ff$ then $v^3=1_\ff$, 
which is impossible, since $3$ does not divide $\card{\ff^\ast}$.

Now we prove~\ref{item:B} by showing that if $X$ is a subgroup of index $2$ in $\ff^+$ containing $1_\ff$ 
then the number of Carter subgroups of $G$ containing 
$\gensub{e_X}$ is divisible by $p^{2^{p-1}+1}$.  
Fix such an $X$.

Let $\cl_G(e_X)$ be the $G$-conjugacy class of $e_X$.  
Counting in two ways the pairs $(s,C)$ with $s \in \cl_G(e_X)$  
and $C$ a Carter subgroup of $G$ containing $s$, 
we see that the number $\nu(e_X)$ of Carter subgroups containing $e_X$ satisfies
\begin{equation}\label{pairs}
\nu(e_X) = \frac{\card{\cent_G(e_X)} \card{\cl_G(e_X) \cap H}}{|H|}.
\end{equation}

Given~\eqref{Eq:horder}, 
it suffices now to show that $\card{\cent_G(e_X)} \card{\cl_G(e_X) \cap H}$ is divisible by $p^{2^p+2}$.

We observe that $\cent_G(e_X) = \cent_A(e_X) W$ 
and that $\cent_A(e_X)$ is the setwise stabilizer of $X$ in the action of $A$ on $\ff$.  
Moreover, $a \in A$ maps $e_X$ to $e_{a(X)}$, 
thus the $G$-conjugates of $e_X$ are in bijection with the $A$-conjugates of $X$. 

If $a \in JM = \mathrm{\Gamma L_1(2^p)}$ then $a(X)$ is a subgroup of index $2$ in $\ff^+$, 
while if $v \in V$ then $v(X)$ is a coset of such a subgroup.  
Therefore, every $A$-conjugate of $X$ is a coset of a subgroup of index $2$ in $\ff^+$.  
In fact, every such coset is an $A$-conjugate of $X$.  
Indeed, given any such subgroup $K$, there is some $v \in V$ mapping $K$ to its non-trivial coset.  
Moreover, $M$ permutes the set of index $2$ subgroups regularly: 
if $K$ is such a subgroup and $\beta \in \ff^\ast$ with $\beta K = K$, 
then the multiplicative order of $\beta$ divides both $\card{\ff^\ast} = 2^p-1$ and $\card{K \setminus \{0_\ff\}} = 2^{p-1}-1$ 
(since $\beta$ acts semiregularly on $\ff \setminus \{0_\ff\}$).  
Therefore $\beta = 1$.  
Since the number of index $2$ subgroups of $\ff^+$ is $2^p-1 = |M|$, 
our claim follows.

We see now that $|\cl_G(e_X)| = 2(2^p-1)$.  
By~\eqref{Eq:gorder}
\[
\card{\cent_G(e_X)} = 2^{p-1} p^{2^p+1}.
\]
It remains to show that $\card{\cl_G(e_X) \cap H}$ is divisible by $p$.  
We know that the elements of $\cl_G(e_X)$ are those $e_S$ 
such that $S$ is a coset of an index $2$ subgroup of $\ff^+$.  
Since such an $e_S$ lies in $W$, 
we have $e_S \in H$ if and only if $e_S \in \cent_W(V_2)$, 
that is, if and only if $S + 1_\ff = S$.  
If $S \leq \ff^+$ then $S + 1 = S$ if and only if $1 \in S$.  
If $S$ is the non-trivial coset of $K \leq \ff^+$ then $S+1=S$ if and only if $K + 1 = K$.

We conclude that $\card{\cl_G(e_X) \cap H}$ 
is twice the number of subgroups of index $2$ in $\ff^+$ that contain $1_\ff$.  
By transitivity of $M$ on $\ff \setminus \{0_\ff\}$, 
every $\alpha \in \ff^\ast$ is contained in the same number of index $2$ subgroups of $\ff^+$.  
As there are $2^p-1$ such subgroups, 
each containing $2^{p-1}-1$ elements of $\ff^\ast$, 
we see that
\[
\card{\cl_G(e_X) \cap H} = 2(2^{p-1}-1),
\]
and Fermat's Little Theorem implies that $p$ divides $\card{\cl_G(e_X)}$. 
\end{proofof}

We now begin work to show that in the presence of a normal Hall subgroup
there is such a version of \enquote{divisibility}  
(cf. \autoref{Thm:EOnePrime} and the corollaries that follow) 
and start with the next lemma which is a generalisation of Lemma 1.1 in~\cite{arithmetical_questions}.

\begin{lemma}\label{Lem:CarterSemidirect}
Let $G$ be a finite group with $X \leq C \leq G$ and assume $N \unlhd G$ with $G = N \rtimes  C$. 
If $\mathcal{A} = \{ C^g : g \in G\}$ is the conjugacy class of $C$ in $G$, then 
\[
\card{\{ C^g \in \mathcal{A} : X \leq C^g\}} = (\cent_N(X) : \cent_N(C))\,. 
\]
\end{lemma}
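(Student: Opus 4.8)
The plan is to set up a bijection between the set $\{C^g \in \mathcal{A} : X \leq C^g\}$ and the set of right cosets $\cent_N(X)\backslash\cent_N(C)$ — or rather, to count both sides by a double-coset / orbit-counting argument. First I would exploit the fact that, since $G = N \rtimes C$, every element of $G$ is uniquely written as $nc$ with $n \in N$ and $c \in C$, and that $C^{nc} = (C^n)^c$; moreover as $c$ ranges over $C$ it fixes $C$ setwise, so $\{C^g : g \in G\} = \{C^n : n \in N\}$. Thus the conjugates of $C$ are parametrised by $N$, with $C^{n_1} = C^{n_2}$ if and only if $n_1 n_2^{-1} \in \N_G(C) \cap N = \N_N(C)$; but in a semidirect product $\N_N(C) = \cent_N(C)$ (if $n \in N$ normalises $C$, then for $c \in C$ the element $[n,c] = n^{-1}n^c$ lies in $N$, while $n^{-1}(n^c)$ rewritten shows $c^{-1}c^n \in N \cap C = 1$, forcing $c^n = c$). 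So $\mathcal{A} \leftrightarrow \cent_N(C)\backslash N$.

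Next I would determine, for $n \in N$, when $X \leq C^n$. Since $X \leq C$, this holds iff $X^{n^{-1}} \leq C$, and writing any $x \in X$ as an element of $C$, the condition $x^{n^{-1}} \in C$ together with the unique factorisation in $N \rtimes C$ forces (as in the normaliser computation above) $x^{n^{-1}} = x$, i.e. $n$ centralises $x$. Hence $X \leq C^n$ if and only if $n \in \cent_N(X)$. Therefore the conjugates $C^g$ containing $X$ correspond exactly to the cosets $\cent_N(C) n$ with $n \in \cent_N(X)$, and since $\cent_N(C) \leq \cent_N(X)$ (because $X \leq C$), the number of such cosets is precisely $(\cent_N(X) : \cent_N(C))$, as desired.

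The only genuinely delicate point — and the one I would write out carefully rather than wave at — is the repeated use of the uniqueness of the factorisation $G = N \rtimes C$ to turn statements of the form ``$y^{n} \in C$ for $y \in C$, $n \in N$'' into ``$y^n = y$''. Concretely: $y^{-1}y^n = [y,n^{-1}]^{-1}$-type manipulations land us with an element that is simultaneously in $C$ (being a product of the $C$-element $y^{-1}$ and the assumed $C$-element $y^n$) and in $N$ (being a commutator with a factor in the normal subgroup $N$, once one checks it lies there), hence trivial. I should be slightly careful about left/right conventions and about which commutator lies in $N$, but this is exactly the mechanism behind Lemma~1.1 of~\cite{arithmetical_questions}, of which the present lemma is the stated generalisation, so no new idea is needed. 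With that lemma in hand the count is immediate, and in fact this argument does not use any hypothesis beyond $X \leq C \leq G$ and $G = N \rtimes C$ — in particular neither finiteness of index nor any property specific to Carter subgroups is invoked, which is why the statement will be applicable later with $C$ a Carter subgroup and $N$ a normal Hall subgroup.
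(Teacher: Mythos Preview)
Your proof is correct and takes essentially the same approach as the paper: both reduce to conjugation by $N$, show via the unique factorisation $N \cap C = 1$ that $X \leq C^n$ if and only if $n \in \cent_N(X)$ (the paper packages this as $C \cap C^n \leq \cent_C(n)$), and then count by cosets of $\cent_N(C)$ in $\cent_N(X)$ (the paper phrases this last step as orbit--stabiliser for the action of $\cent_N(X)$ on $\{C^n : X \leq C^n\}$). Your explicit verification that $\N_N(C) = \cent_N(C)$ is precisely what the paper invokes implicitly when it identifies the stabiliser of $C$ as $\cent_N(C)$.
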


\begin{proofof}
If $C^g \in \mathcal{A}$ with $X \leq C^g$ 
then  $C^g = C^n$ for some $n \in N$ and $X \leq C^n \cap C$. 
But $C \cap C^n \leq \cent_C(n)$.  
Hence  $X \leq \cent_C(n)$ and thus 
\[
\cent_N(X) = \{n \in N : X \leq C^n\}.
\] 
Therefore, $\cent_N(X)$ acts transitively on the set  $\{C^n \in \mathcal{A} : X \leq C^n\}$, 
which is the orbit of $C$ under this action. 
The stabiliser of $C$ is $\cent_{\cent_N(X)}(C) = \cent_N(C)$,
so 
\[
\card{\{C^n : X \leq C^n \}} = (\cent_N(X) : \cent_N(C))
\]
and the proof is complete. 
\end{proofof}

The above implies:

\begin{proposition}\label{Prop:SemidirWithCart}
Assume that $G$ is a finite soluble group, $X \leq C \leq G$ with $C \in \cart(G)$, 
and $N \unlhd G$ such that $G = N \rtimes C$. 
Then $n_C(G, X) = \card{\cent_N(X)}$  divides $\card{\cart(G)}$.
\end{proposition}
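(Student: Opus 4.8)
The plan is to obtain both divisibilities from \autoref{Lem:CarterSemidirect} applied to the Carter subgroup $C$ of $G$. First I would record that, since $G$ is soluble, Carter subgroups exist and form a single conjugacy class, so every Carter subgroup of $G$ containing $X$ is a conjugate $C^g$ of our fixed $C$; thus $n_C(G,X) = \card{\{C^g \in \mathcal{A} : X \leq C^g\}}$ in the notation of the lemma. Applying \autoref{Lem:CarterSemidirect} directly gives
\[
n_C(G,X) = (\cent_N(X) : \cent_N(C)).
\]
Next I would show $\cent_N(C) = 1$: a Carter subgroup is self-normalising, so $\cent_N(C) \leq \N_G(C) \cap N$, and since $G = N \rtimes C$ with $N \unlhd G$ we have $\N_G(C) \cap N = \cent_N(C) \trianglelefteq \N_G(C)$; but $C$ normalises this subgroup and $C$ is its own normaliser, forcing $\cent_N(C) C = C$, i.e. $\cent_N(C) = 1$. (Alternatively, $C \cent_N(C)$ would be a nilpotent subgroup properly containing $C$ unless $\cent_N(C) = 1$, since $C$ centralises $\cent_N(C)$; either argument works.) Hence $n_C(G,X) = \card{\cent_N(X)}$, which is the first assertion.

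For the divisibility, take $X = C$ (the special case $X = C$ in the setup, or simply note $C \in \cart(G,C)$): then $n_C(G,C) = \card{\cart(G)}$ because, again by conjugacy of Carter subgroups and the fact that $\cent_N(C) = 1$ forces each $C^g$ to be determined by its $N$-coset, the Carter subgroups containing $C$ are exactly $C$ itself — more carefully, $\card{\cart(G)} = (G:\N_G(C))$, and using $\N_G(C) = C$ and $G = N\rtimes C$ one gets $\card{\cart(G)} = \card{N}$. On the other hand $n_C(G,X) = \card{\cent_N(X)}$ and $\cent_N(X) \leq N$, so $\card{\cent_N(X)}$ divides $\card{N} = \card{\cart(G)}$ by Lagrange, completing the proof.

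I do not anticipate a serious obstacle here; the proposition is essentially a clean corollary of \autoref{Lem:CarterSemidirect}. The only point requiring a moment's care is the verification $\cent_N(C) = 1$ — making sure the self-normalising property of $C$ is used correctly in the semidirect-product setting — and the bookkeeping identity $\card{\cart(G)} = \card{N}$, which follows from $\N_G(C) = C$ together with $G = N \rtimes C$. Everything else is Lagrange's theorem and the standard facts about Carter subgroups recalled at the start of this section.
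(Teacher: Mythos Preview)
Your proof is correct and mirrors the paper's exactly: apply \autoref{Lem:CarterSemidirect}, use $\cent_N(C) \leq \N_G(C) \cap N = C \cap N = 1$ (self-normalising plus the semidirect product), and finish with $\card{\cart(G)} = (G:C) = \card{N}$ together with Lagrange. One aside: drop the parenthetical alternative --- Carter subgroups are not maximal among nilpotent subgroups and $\cent_N(C)$ need not itself be nilpotent, so ``$C\,\cent_N(C)$ would be a larger nilpotent subgroup'' does not by itself give a contradiction; the self-normalising argument is the right one (and is what the paper uses).
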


\begin{proofof}
As $C \in \cart(G)$ we have $\cent_N(C) \leq \N_G(C) \cap N  = C \cap N = 1$. 
Hence \autoref{Lem:CarterSemidirect} implies that $n_C(G, X) =  \card{\cart(G, X)}  = \card{\cent_N(X)}$, 
since the Carter subgroups of $G$ form a single $G$-conjugacy class. 
But $\card{\cart(G)} = (G : \N_G(C)) = (G : C) = |N| $, 
and the proposition follows.
\end{proofof}

The following lemma was communicated to the first author by I.M. Isaacs in the Winter of 2016 and with his kind permission we include it here.

\begin{lemma}[Isaacs]\label{Lem:normalhall-Isaacs}
Let $N$ be a normal Hall $\pi$-subgroup of the finite soluble group $G$,
where $\pi \subseteq \varpi(G)$, 
and let $Q \in \Hall_{\pi^{\prime}}(G)$. 
If $C$ is a Carter subgroup of $G$ then $C_{\pi'}$ is a  Carter subgroup of some conjugate of $Q$. 
Furthermore, for a given Carter subgroup $D$ of the fixed subgroup $Q$ 
we have 
\[
\cart(G, D) = \{ DU : U \in \cart(\cent_N(D))\}.
\]

\end{lemma}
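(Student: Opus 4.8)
The statement has two parts: first, that $C_{\pi'}$ is a Carter subgroup of a conjugate of $Q$; and second, the explicit description of $\cart(G,D)$ for a fixed Carter subgroup $D$ of the fixed $Q$. For the first part, the plan is to use the image of $C$ in $G/N$. Since $N$ is a normal Hall $\pi$-subgroup, $G/N$ is a $\pi'$-group of order $|Q|$, and by Schur--Zassenhaus $G = N \rtimes Q$ with $G/N \cong Q$. Because $C$ is a Carter subgroup of $G$, its image $CN/N$ is a Carter subgroup of $G/N$; but $G/N$ is nilpotent? No---it need not be, so $CN/N$ is a genuine Carter subgroup of $G/N$. Pulling back along the isomorphism $Q \to G/N$ and using that all Carter subgroups of $G/N$ are conjugate, we find $CN/N$ corresponds to a conjugate of some Carter subgroup of $Q$. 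More directly: $CN/N$ is a Carter subgroup of $G/N \cong Q$, and since $C_{\pi'}$ maps isomorphically onto $CN/N$ (as $C \cap N = C_\pi$ because $N$ is a Hall $\pi$-subgroup and $C$ is nilpotent, so $C_\pi \le N$ while $C_{\pi'} \cap N = 1$), we get that $C_{\pi'} \cong CN/N$ is Carter in a subgroup of $G$ isomorphic to $Q$ sitting as a complement; conjugating within $G$ we may arrange $C_{\pi'} \le Q^g$ and then $C_{\pi'}$ is Carter in $Q^g$, using the transfer-of-Carter-subgroups facts recalled before the lemma (if $H/N$ is Carter in $G/N$ and $K$ is Carter in $H$ then $K$ is Carter in $G$, applied appropriately, together with the observation that a complement to $N$ containing $C_{\pi'}$ projects isomorphically to $G/N$).

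For the second part I fix the specific complement $Q$ and a Carter subgroup $D$ of $Q$, and I must identify all Carter subgroups of $G$ that contain $D$. First I would show the inclusion $\supseteq$: given $U \in \cart(\cent_N(D))$, I claim $DU$ is a Carter subgroup of $G$ containing $D$. Since $D$ normalises $\cent_N(D)$ (indeed $D$ centralises it), $DU$ is a subgroup; it is nilpotent because $D$ centralises $\cent_N(D) \ge U$, so $DU = D \times U$ up to the obvious identification, and a direct product of nilpotent groups is nilpotent; and one checks $DU$ is self-normalising in $G$ by a Frattini-type argument: any $g$ normalising $DU$ normalises its Hall $\pi$-part $U = (DU)_\pi$ (here using $D$ is a $\pi'$-group, as $D \le Q \in \Hall_{\pi'}(G)$, so $U$ is the full Sylow/Hall $\pi$-part of $DU$) and its Hall $\pi'$-part, reducing to self-normality of $D$ in a suitable section and of $U$ in $\cent_N(D)$. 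For the reverse inclusion $\subseteq$: if $E$ is a Carter subgroup of $G$ with $D \le E$, then by the first part $E_{\pi'}$ is Carter in a conjugate of $Q$; since $D \le E_{\pi'}$ (as $D$ is a $\pi'$-group) and $D \le Q$, and Carter subgroups of $G$ are conjugate, a conjugation argument pinning down $Q$ forces $E_{\pi'} = D$ (using that $D$, being Carter in $Q$, is self-normalising there, so the only conjugate of $Q$'s Carter subgroup containing $D$ is $D$ itself). Then $E = D E_\pi$ with $E_\pi \le N$ (as $N$ is the Hall $\pi$-subgroup), and nilpotency of $E$ forces $E_\pi \le \cent_N(D)$; self-normality of $E$ in $G$ then restricts to self-normality of $E_\pi$ in $\cent_N(D)$, i.e. $E_\pi \in \cart(\cent_N(D))$, giving $E = DE_\pi$ of the required form.

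\textbf{Main obstacle.} The routine parts are the nilpotency computations and the Hall-decomposition bookkeeping. The genuinely delicate step is the first part: showing $C_{\pi'}$ lands (up to conjugacy) inside the \emph{fixed} $Q$ as a \emph{Carter} subgroup, and more importantly, in the reverse inclusion of the second part, controlling the $\pi'$-part of an arbitrary Carter subgroup $E \ge D$ to conclude $E_{\pi'} = D$ rather than merely a conjugate of $D$. This requires carefully combining the conjugacy of Carter subgroups of $G$ with the conjugacy of Carter subgroups of $Q$ and the self-normalising property, and making sure the conjugating element can be taken to fix $D$ pointwise; I expect to invoke the transfer lemmas recalled from Section~9.5 of~\cite{Robinson1982ACI} together with \autoref{Lem:CarterSemidirect} applied inside $\cent_N(D)$ to make this precise. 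A secondary subtlety is verifying that $D$ really is the full $\pi'$-part of $DU$ (so that Hall decompositions behave as expected), which uses $D \le Q \in \Hall_{\pi'}(G)$ and $U \le N$ a $\pi$-group.
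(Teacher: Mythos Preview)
Your plan is essentially correct and follows the same route as the paper: pass to $G/N$ for the first claim, and for the second claim split into the two inclusions using the Hall decomposition $E = E_{\pi'} \times E_\pi$ of a nilpotent $E$.

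Two remarks. First, the step you flag as the ``main obstacle'' --- showing $E_{\pi'}=D$ for $E\in\cart(G,D)$ --- is in fact immediate and needs neither \autoref{Lem:CarterSemidirect} nor any delicate conjugation. Since $D\leq E$, we have $DN/N\leq EN/N$, and both are Carter subgroups of $G/N$ (the first because $D$ is Carter in $Q\cong G/N$, the second because images of Carter subgroups are Carter). A Carter subgroup cannot properly contain another, so $DN=EN$; comparing orders of Hall $\pi'$-parts gives $|D|=|E_{\pi'}|$, and $D\leq E_{\pi'}$ forces equality. The paper simply writes ``$C=U\times D$'' at this point, implicitly using exactly this observation.

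Second, for the inclusion $\supseteq$ the paper organises the computation slightly differently from your sketch: rather than computing $\N_G(DU)$ directly, it first shows $DU$ is Carter in $X:=ND$ (a Dedekind argument giving $\N_N(DU)=\N_N(U)\cap\cent_N(D)=\N_{\cent_N(D)}(U)=U$), and then invokes the lifting principle you recalled ($X/N$ Carter in $G/N$ and $DU$ Carter in $X$ imply $DU$ Carter in $G$). Your direct approach also works, but the phrase ``self-normality of $D$ in a suitable section'' hides the key point: one needs $\N_G(D)\leq ND$, which is exactly the statement that $DN/N$ is self-normalising in $G/N$. Once that is said, $\N_G(D)=\cent_N(D)\,D$ and the rest of your argument goes through.
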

\begin{proofof}
Let $C \in \cart (G)$ and $D \in \cart(Q)$.  Both $CN$ and $DN$ are full preimages of Carter subgroups of $G/N$ and so, by conjugating appropriately, we may assume that $CN=DN$. Then $D$ is a Hall  $\pi'$-subgroup of $DN$,  and $C_{\pi'}$ is a Hall  $\pi'$-subgroup of $CN$, given that $N$ is a $\pi$-group. 
We conclude that $C_{\pi'}= D^x$ for some $x \in G$. As  $D^x$ is a Carter subgroup of $Q^x$, 
the first claim of the lemma follows.  

To prove that $\cart(G, D)=   \{ DU : U \in \cart(\cent_N(D))\}$, 
we first fix   $C \in \cart (G, D)$ and we claim that $C \in  \{ DU : U \in \cart(\cent_N(D))\}$. Indeed, $C = U \times D$ with $U \coloneqq C \cap N$ being the unique Hall $\pi$-subgroup of $C$. Note that $U$ centralises $D$, thus $U \leq Y \coloneqq \cent_N(D)$, and we want that $U$ be Carter in $Y$. But $U$ is nilpotent, so it suffices that $\N_Y(U) = U$. If some $g \in Y$ normalises $U$, then it normalises $C$, since it centralises $D$, thus $g \in C \cap Y \leq U$.

Conversely, let $C \coloneqq DU$ where  $D \in \cart(Q)$ and  $U \in \cart(\cent_N(D))$  as before. The aim is to show that $C \in \cart(G, D)$.   Clearly, both $U$ and $D$ are nilpotent mutually centralising subgroups of $C$, thus both normal in $C$, and it follows that $C$ is nilpotent. To complete the proof, it thus suffices to establish that $\N_G(C) = C$. 
First, let us prove that $C$ is Carter in $X \coloneqq ND$. 

By Dedekind, $\N_X(C) = \N_N(C)D$, thus it suffices that $\N_N(C) = U$. 
The uniqueness of Hall subgroups in nilpotent groups 
forces $\N_N(C)$ to normalise both $U$ and $D$ and thus $\N_N(C) = \N_N(U ) \cap \N_N(D)$. 
But  $\N_N(D) = \cent_N(D)$
and so
\[
\N_N(C) =  \N_N(U ) \cap \cent_N(D)= \N_{\cent_N(D)}(U). 
\]
It follows that $\N_N(C)= U$,  since  $U \in \cart(\cent_N(D))$. 

Now, $X/N$ is Carter in $G/N$, and $C$ is Carter in $X$. 
Therefore, $C$ is Carter in $G$ by our preliminary remarks at the beginning of the section, 
and the proof is complete.
\end{proofof}

In the case that $N$ is a Hall $q'$-subgroup of $G$, Isaacs's lemma  simplifies to: 
\begin{corollary}\label{Cor:IsaacsOnePrime}
Let $N$ be a normal Hall $q'$-subgroup of the finite soluble group $G$,
where $q $ is a prime dividing the order of $G$  
and let $Q \in \syl_{q}(G)$. 
Then every Carter subgroup $C$ of $G$ contains a conjugate of $Q$ and thus $C_q  \in \syl_q(G)$.
Furthermore, for a  fixed  Sylow $q$-subgroup $Q$  of $G$ 
we have 
\[
\cart(G, Q) = \{ QU : U \in \cart(\cent_N(Q))\}.
\]

\end{corollary}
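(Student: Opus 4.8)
The plan is to obtain this corollary as the special case of \autoref{Lem:normalhall-Isaacs} in which $\pi \coloneqq \varpi(G) \setminus \{q\}$. With this choice, $N$ being a normal Hall $\pi$-subgroup of $G$ is precisely the hypothesis that $N$ is a normal Hall $q'$-subgroup, and $\Hall_{\pi'}(G) = \syl_q(G)$, so the fixed subgroup called $Q$ in the lemma becomes a fixed Sylow $q$-subgroup of $G$. Everything else then comes from a single elementary observation about nilpotent groups.

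The only extra ingredient I would record is that a nilpotent group $P$ has a \emph{unique} Carter subgroup, namely $P$ itself: $P$ is nilpotent and self-normalising in $P$, and any Carter subgroup of $P$ is self-normalising in the nilpotent group $P$, hence equal to $P$. Applying this to the $q$-group $Q$, its only Carter subgroup is $Q$. Now \autoref{Lem:normalhall-Isaacs} says that for any Carter subgroup $C$ of $G$, the Hall $\pi'$-subgroup $C_{\pi'} = C_q$ is a Carter subgroup of some conjugate $Q^g$ of $Q$; since $Q^g$ is nilpotent this forces $C_q = Q^g$. Hence $C_q \in \syl_q(G)$ and, as $C$ is nilpotent and therefore has $C_q$ as its Sylow $q$-subgroup, $C \geq C_q = Q^g$ contains a conjugate of $Q$. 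This yields the first assertion.

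For the second assertion, fix $Q \in \syl_q(G)$ and take $D \coloneqq Q$, which by the observation above is the unique Carter subgroup of the nilpotent group $Q$. Then $\cart(G, D) = \cart(G, Q)$, and the displayed formula of \autoref{Lem:normalhall-Isaacs} specialises verbatim to
\[
\cart(G, Q) = \{ QU : U \in \cart(\cent_N(Q))\},
\]
which is exactly what is claimed.

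I expect no real obstacle here: the substance is entirely carried by \autoref{Lem:normalhall-Isaacs}, and the proof is essentially bookkeeping. The one point to be careful about is keeping the roles of $\pi$, $\pi'$, $N$, and the Hall subgroups straight when passing to $\pi = q'$, together with the (trivial but essential) remark that a $q$-group is its own unique Carter subgroup, which is what collapses "Carter subgroup of a conjugate of $Q$" to "conjugate of $Q$".
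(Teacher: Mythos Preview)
Your proposal is correct and follows precisely the paper's own approach: the paper's proof is the single sentence ``The only Carter subgroup of a $q$-group $Q$ is $Q$ itself,'' which is exactly the key observation you isolate, and everything else is the specialisation of \autoref{Lem:normalhall-Isaacs} to $\pi = q'$ that you describe.
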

\begin{proof}
The only Carter subgroup of a $q$-group $Q$  is $Q$ itself. 
\end{proof}

We can now prove 

\begin{prevtheorem}\label{Thm:EOnePrime}
Let $G$ be soluble, and suppose that $N$
	is a normal Hall $q'$-subgroup of $G$, where $q \in \varpi(G)$. 
	Let $X \leq G$ and assume that
	$X$ is contained in some Carter subgroup $C$ of $G$. 
	Write $X = X_{q'}  \times X_{q}$ and $C = C_{q'} \times C_{q}$ 
	for their respective decompositions according to Sylow $q$- and Hall $q^{\prime}$-subgroups.
 Let $A = \cent_G(X_{q'})$  and note that  $X_{q} \leq C_{q} \leq A$. Then $C_q$ is a Sylow $q$-subgroup of both  $G$ and  $A$ and  
 \begin{equation}
 n_C(G,X) \, =  |\syl_q(A, X_q)| \cdot  n_C(\cent_N(C_{q}), X_{q'})
	\end{equation}
\end{prevtheorem}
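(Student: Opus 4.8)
The strategy is to reduce the count $n_C(G,X)$ to a product of two independent counts by peeling off the Sylow $q$-part and the Hall $q'$-part of a containing Carter subgroup, using \autoref{Cor:IsaacsOnePrime} as the main structural input. First I would record that since $N$ is a normal Hall $q'$-subgroup, $G = N \rtimes Q$ for any $Q \in \syl_q(G)$, and by \autoref{Cor:IsaacsOnePrime} every Carter subgroup $C$ of $G$ has $C_q \in \syl_q(G)$ and decomposes as $C = C_q C_{q'}$ with $C_{q'} = C \cap N \le \cent_N(C_q)$ a Carter subgroup of $\cent_N(C_q)$; moreover $\cart(G, C_q) = \{C_q U : U \in \cart(\cent_N(C_q))\}$. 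Since $X \le C$, we have $X_q \le C_q$ and $X_{q'} \le C_{q'} \le \cent_N(C_q)$, and $C_q$ centralises $X_{q'}$, so $C_q \le A = \cent_G(X_{q'})$; as $C_q \in \syl_q(G)$ and $A \le G$, $C_q \in \syl_q(A)$ too, which establishes the parenthetical claim in the statement.

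**Main reduction.** The plan is to count pairs $(Q', C)$ where $Q' = C_q$ ranges over Sylow $q$-subgroups of $G$ containing $X_q$ and $C$ ranges over Carter subgroups of $G$ with $C_q = Q'$ and $X \le C$. On the one hand this is exactly $n_C(G,X)$, since every Carter subgroup of $G$ containing $X$ determines such a pair uniquely (its $q$-part) and conversely. On the other hand, the $q$-parts $Q'$ that can occur are precisely those Sylow $q$-subgroups of $G$ that contain $X_q$; I would argue that since all Sylow $q$-subgroups of $G$ are conjugate and, by \autoref{Cor:IsaacsOnePrime}, each one is the $q$-part of some Carter subgroup, those containing $X_q$ are exactly the members of $\syl_q(G, X_q)$ — and since $X_q \le C_q \le A$ with $C_q \in \syl_q(A)$, one checks $\syl_q(G, X_q) = \syl_q(A, X_q)$ (any Sylow $q$-subgroup of $G$ containing $X_q$ centralises... no — rather, any $Q' \in \syl_q(G)$ with $X_q \le Q'$ need not centralise $X_{q'}$; instead one uses that the number of such $Q'$ equals $|\syl_q(A, X_q)|$ because the Sylow $q$-subgroups of $A$ containing $X_q$ are among those of $G$, and a counting/conjugacy argument via $A$ acting shows the counts agree). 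For each fixed admissible $Q'$, the Carter subgroups $C$ of $G$ with $C_q = Q'$ and $X \le C$ are, by \autoref{Cor:IsaacsOnePrime}, exactly the $Q' U$ with $U \in \cart(\cent_N(Q'))$ and $X_{q'} \le U$ (the condition $X_q \le Q'$ being automatic), i.e. there are $n_C(\cent_N(Q'), X_{q'})$ of them.

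**Finishing.** It remains to see that $n_C(\cent_N(Q'), X_{q'})$ does not depend on the choice of admissible $Q'$ and equals $n_C(\cent_N(C_q), X_{q'})$. This should follow because any two Sylow $q$-subgroups $Q', Q''$ of $G$ containing $X_q$ that arise as $q$-parts of Carter subgroups containing $X$ are conjugate by an element of $A = \cent_G(X_{q'})$: indeed $Q'$ and $Q''$ are conjugate in $A$ (both lie in $\syl_q(A)$ and contain $X_q$, so they are $A$-conjugate by Sylow, but one needs them $\cent_A(X_{q'})$... they already are in $A$), and conjugation by $a \in A$ carries $\cent_N(Q')$ to $\cent_N(Q'')$ and fixes $X_{q'}$, hence preserves the count. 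Summing the constant value $n_C(\cent_N(C_q), X_{q'})$ over the $|\syl_q(A, X_q)|$ admissible $Q'$ gives the formula. The main obstacle I anticipate is pinning down precisely the set of admissible $q$-parts — showing it is exactly $\syl_q(A, X_q)$ rather than some larger or smaller set — and verifying the $A$-conjugacy statement cleanly enough that the per-$Q'$ count is genuinely constant; this is where one must be careful about whether "$Q' \in \syl_q(G)$, $X_q \le Q'$" forces $Q'$ to arise from a Carter subgroup containing $X$, which is where \autoref{Cor:IsaacsOnePrime} (every $Q' \in \syl_q(G)$ is a $q$-part of a Carter subgroup) together with the freedom to choose $U \in \cart(\cent_N(Q'))$ with $X_{q'} \le U$ — nonempty because $X_{q'}$ is a nilpotent subgroup of the soluble group $\cent_N(Q')$... actually one needs $X_{q'}$ to lie in *some* Carter subgroup of $\cent_N(Q')$, which requires checking $X_{q'}$ normalises appropriately or invoking that $X \le C$ for some Carter $C$ with $C_q$ conjugate to $Q'$ — should be settled by the conjugacy reduction above.
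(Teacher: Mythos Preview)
Your overall strategy is exactly the paper's: decompose each $S\in\cart(G,X)$ as $S_qS_{q'}$, study the map $S\mapsto S_q$, show its image is $\syl_q(A,X_q)$, compute each fibre via \autoref{Cor:IsaacsOnePrime} as $n_C(\cent_N(S_q),X_{q'})$, and use $A$-conjugacy of the $S_q$'s to see the fibre size is constant.

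The one place where your write-up is genuinely unclear is the identification of the image of $S\mapsto S_q$. You first assert it is $\syl_q(G,X_q)$, then (correctly) retract this, and then hand-wave that ``the counts agree'' with $|\syl_q(A,X_q)|$. That is not how the paper (or a clean proof) proceeds, and the equality $\syl_q(G,X_q)=\syl_q(A,X_q)$ is false in general. What is true, and what you essentially have scattered in your Plan and Finishing paragraphs without assembling, is:
\begin{itemize}
\item \emph{Image contained in $\syl_q(A,X_q)$:} for any $S\in\cart(G,X)$ (not just the given $C$), $S_q$ centralises $S_{q'}\geq X_{q'}$, so $S_q\leq A$; since $S_q\in\syl_q(G)$ and $S_q\leq A$, one has $S_q\in\syl_q(A,X_q)$.
\item \emph{Surjectivity:} given $R\in\syl_q(A,X_q)$, both $R$ and $C_q$ lie in $\syl_q(A)$, so $R=C_q^{\,a}$ for some $a\in A$. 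Since $a$ centralises $X_{q'}$, we have $X_{q'}=X_{q'}^{\,a}\leq C_{q'}^{\,a}$ and $X_q\leq R=C_q^{\,a}$, so $C^{\,a}\in\cart(G,X)$ with $q$-part $R$.
\end{itemize}
This same element $a\in A$ gives $\cent_N(R)=\cent_N(C_q)^{\,a}$ and $X_{q'}^{\,a}=X_{q'}$, which is precisely the $A$-conjugacy you invoke in your Finishing step to make the fibre count constant. Once you state these two bullets explicitly, your outline becomes a complete proof identical to the paper's.
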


\begin{proof}
In view of \autoref{Cor:IsaacsOnePrime} the group  $C_q$ is a Sylow $q$-subgroup of $G$ and thus of $A$ as it is contained in $A$. The same corollary implies that  for any  $S \in \cart(G, X)$ we have $S_q \in \syl_q(G, X_q)$. But $S_q$ centralizes $S_{q'} \geq X_{q'}$ and thus $S_q \leq A= C_G(X_{q'})$ which in turn implies that $S_q  \in  \syl_q(A, X_q)$. 

 On the other hand, if $R \in \syl_q(A, X_q)$ then we claim that there exists a Carter subgroup $S \in \cart(G,X)$ having $R$ as its Sylow $q$-subgroup, i.e. $S_q=R$. Indeed, $R = (C_q)^a$ for some $a \in A$ because both $R$ and $C_q$ are Sylow $q$-subgroups of $A$. Note that $(C_{q'})^a$ also lies above $X_{q'}$ because $C_{q'}$ contains $X_{q'}$ and $a$ centralizes $X_{q'}$. Therefore, $C^a$ contains $X$ and clearly $C^a$ is a Carter subgroup of $G$. 
 Hence the desired Carter $S$ is $S= C^a$ for some $a \in A$.

We conclude therefore that the map sending any $S \in \cart(G, X)$ to  $S_q \in \syl_q(A, X_q)$ is well defined and  onto. We want now to count the number of  Carter subgroups of $G$ over $X$ that are  mapped to the same Sylow $q$-subgroup  of $A$ (over $X_q$). Assume $R \in \syl_q(A, X_q)$ and let $S \in \cart(G, X)$ with $S_q=R$. 
Then $R $ is also a Sylow $q$-subgroup of $G$ and 
by  \autoref{Cor:IsaacsOnePrime}, the Carter subgroups of $G$ that contain  $R$  are exactly
the subgroups $R U$, where $U$ is a Carter subgroup of $\cent_N(R)$. By
	assumption, $R$ contains $X_{q}$, so we see that $RU$ contains $X$ if
	and only if $U$ contains $X_{q'}$. It follows that the number $m(R)$ of Carter subgroups of $G$ that have  $R$ as a Sylow $q$-subgroup and  contain $X$ is
	\[
	m(R) = \card{\cart(G, R) \cap \cart(G, X)} =  n_C(\cent_N(R), X_{q'}).
	\]
	We argue that $n_C(\cent_N(R), X_{q'})=n_C(\cent_N(C_{q}), X_{q'})$ 
	and thus $m(R)$  is independent of the choice of the Sylow $q$-subgroup  $R \in  \syl_q(A, X_{q})$. 
	  Indeed, $R= S_q= (C_q)^a$  for some $a \in A= C_G(X_{q'})$. Therefore $X_{q'} = X_{q'}^a$ and $\cent_N(R)= \cent_N(C_q)^a$. So  
	\[
	n_C(\cent_N(R), X_{q'}) = n_C(\cent_N(C_{q})^a, X_{q'}^a)= 
	n_C(\cent_N(C_{q}), X_{q'}).
	\]

We see now that the total number of Carter subgroup of $G$ above $X$ equals 
\[
n_C(G,X) \, = \, \sum_{R \in \syl_{q}(A, X_{q})} n_C(\cent_N(R), X_{q'})= 
|\syl_q(A, X_q) | \cdot n_C(\cent_N(C_{q}), X_{q'}).
\]
The proof of the theorem is complete.
\end{proof}

 We would like to prove for some large class of soluble groups $G$ 
that $n_C(G,X)$ divides $(G:C)$, where $C$ is a Carter subgroup of $G$ and $X \leq C$. 
When this happens for all such subgroups $X$, 
we will say that $G$ satisfies property $(*)$.

\begin{corollary}\label{Cor:Asterisk}
Suppose that $G$ has a normal Hall $q'$-subgroup $N$ for some prime $q \in  \varpi(G)$, 
and assume that  $N$ and all its  subgroups satisfy $(*)$. 
Then $G$ satisfies $(*)$.
\end{corollary}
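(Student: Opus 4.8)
The plan is to apply \autoref{Thm:EOnePrime} and reduce everything to the hypothesis that $N$ and its subgroups satisfy $(*)$. Fix a Carter subgroup $C$ of $G$ and a subgroup $X \leq C$; we must show $n_C(G,X)$ divides $(G:C)$. Decompose $X = X_{q'} \times X_q$ and $C = C_{q'} \times C_q$ as in the statement of \autoref{Thm:EOnePrime}, and set $A = \cent_G(X_{q'})$. The theorem gives
\[
n_C(G,X) = |\syl_q(A, X_q)| \cdot n_C(\cent_N(C_q), X_{q'}),
\]
so it suffices to bound each factor by the corresponding part of $(G:C)$.

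First I would handle the $q$-part. Since $C_q$ is a Sylow $q$-subgroup of $G$ (by \autoref{Cor:IsaacsOnePrime}), $(G:C) = (G:C_{q'}C_q)$ has $q$-part equal to $(N:C_{q'})_q \cdot$ (nothing from $C_q$), but in fact $|N| = |C_{q'}|\cdot|N:C_{q'}|$ and $|G| = |N|\cdot|C_q|$, so $(G:C) = |N|/|C_{q'}| = (N : C_{q'})$, a $q'$-number. Hence the $q$-part of $(G:C)$ is trivial, and we need $|\syl_q(A,X_q)|$ to be a $q'$-number — which it is, being a ratio of indices of Sylow normalisers, hence coprime to $q$. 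So the real content is that $|\syl_q(A,X_q)| \cdot n_C(\cent_N(C_q), X_{q'})$ divides $(N : C_{q'})$.

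For the second factor, observe that $\cent_N(C_q)$ is a subgroup of $N$, hence satisfies $(*)$ by hypothesis; moreover $C_{q'}$ is a Carter subgroup of $\cent_N(C_q)$ (this is essentially the content of \autoref{Cor:IsaacsOnePrime}, since $\cart(G, C_q) = \{C_q U : U \in \cart(\cent_N(C_q))\}$ and $C = C_q C_{q'}$ forces $C_{q'} \in \cart(\cent_N(C_q))$), and $X_{q'} \leq C_{q'}$. Therefore property $(*)$ applied inside $\cent_N(C_q)$ gives that $n_C(\cent_N(C_q), X_{q'})$ divides $(\cent_N(C_q) : C_{q'})$. The plan is then to show that $|\syl_q(A, X_q)| \cdot (\cent_N(C_q) : C_{q'})$ divides $(N : C_{q'})$; equivalently, that $|\syl_q(A, X_q)|$ divides $(N : \cent_N(C_q))$.

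The hard part will be this last divisibility, $|\syl_q(A, X_q)| \mid (N : \cent_N(C_q))$. The idea is to count Carter subgroups of $G$ over $X$ in two ways, or rather to reinterpret $|\syl_q(A,X_q)|$ via a transitive action. Since all elements of $\syl_q(A,X_q)$ are conjugate in $G$ (being Sylow $q$-subgroups of $G$ containing $X_q$, hence conjugate by an element of $\cent_G(X_q)$, and in fact one checks the conjugating element can be taken in $N$ since $G = N\rtimes C$ and any two complements to $N$ containing a fixed $X_q$-translate differ by an element of $\cent_N(\cdot)$), one obtains $|\syl_q(A,X_q)| = (\cent_N(X_q) : \cent_N(C_q))$ by a \autoref{Lem:CarterSemidirect}-style orbit count applied with $C_q$ in place of $C$ inside the group $\cent_N(X_q)C_q$ (which is a semidirect product of $\cent_N(X_q)$ by $C_q$, since $C_q$ normalises $\cent_N(X_q)$ as it centralises $X_q$). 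Then $|\syl_q(A,X_q)| = (\cent_N(X_q):\cent_N(C_q))$ plainly divides $(N:\cent_N(C_q))$, and we are done. I expect the bookkeeping around ``the conjugating element lies in $N$'' and the verification that $\cent_N(X_q)C_q$ is a semidirect product with $C_q$ a full Sylow $q$-subgroup to be the only genuinely fiddly points; everything else is a direct assembly of \autoref{Thm:EOnePrime}, \autoref{Cor:IsaacsOnePrime}, and \autoref{Lem:CarterSemidirect}.
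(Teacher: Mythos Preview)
Your overall architecture is exactly that of the paper: factor $n_C(G,X)$ via \autoref{Thm:EOnePrime}, handle the second factor using $(*)$ in $\cent_N(C_q)$ (with $C_{q'}$ Carter there), and then show $|\syl_q(A,X_q)|$ divides $(N:\cent_N(C_q))$, after which the two factors multiply to a divisor of $(N:C_{q'})=(G:C)$. The first two steps are fine.

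The gap is in the last step. Your orbit count via \autoref{Lem:CarterSemidirect} inside $\cent_N(X_q)C_q$ does not compute $|\syl_q(A,X_q)|$; it computes $|\syl_q(G,X_q)|$. Indeed, applying that lemma to $G=N\rtimes C_q$ with $X=X_q$ shows that the $G$-conjugates of $C_q$ containing $X_q$ form a single $\cent_N(X_q)$-orbit of size $(\cent_N(X_q):\cent_N(C_q))$; this is the count of \emph{all} Sylow $q$-subgroups of $G$ over $X_q$, not just those lying in $A=\cent_G(X_{q'})$. There is no reason every $R\in\syl_q(G,X_q)$ should centralise $X_{q'}$, so in general $\syl_q(A,X_q)\subsetneq\syl_q(G,X_q)$ and your claimed equality $|\syl_q(A,X_q)|=(\cent_N(X_q):\cent_N(C_q))$ fails. (If instead you apply \autoref{Lem:CarterSemidirect} in the right group $A=\cent_N(X_{q'})\rtimes C_q$, you get $|\syl_q(A,X_q)|=(\cent_N(X):\cent_N(X_{q'}C_q))$, and it is not obvious that this divides $(N:\cent_N(C_q))$.)

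The paper sidesteps this entirely: it simply invokes Theorem~0.1 of \cite{arithmetical_questions} (the Iranzo--Medina--P\'erez Monasor theorem quoted in the introduction) with $\pi=\{q\}$, $K=A$, $X=X_q$, to conclude directly that $|\syl_q(A,X_q)|$ divides $|\syl_q(G)|=(N:\N_N(C_q))=(N:\cent_N(C_q))$. Replacing your orbit computation with this citation closes the argument.
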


\begin{proofof}
Let  $X \leq G$ be a nilpotent subgroup of $G$ and $C \in \cart(G, X)$, we will show that $n_C(G,X)$ divides $(G:C)$.
By \autoref{Thm:EOnePrime}, we have 
\[
n_C(G,X) =|\syl_q(A, X_q) | \cdot n_C(\cent_N(C_{q}), X_{q'})
\]
where the notation is as in the theorem. Write $G=NC_q$   since (as we have seen)  $C_q \in \syl_q(G)$.
We claim that $C_{q'}$  is a Carter subgroup of $\cent_N(C_{q})$. 
To see this, note that if $x \in \N_{\cent_N(C_{q})} (C_{q'})$ then $x \in \N_G(C)  \cap C_N(C_q) = C  \cap C_N(C_q) = C_{q'}$ and thus the nilpotent subgroup  $C_{q'}$
of $\cent_N(C_{q})$ is self-normalising. Hence by assumption we have that 
$n_C(\cent_N(C_{q}), X_{q'})$
divides $(\cent_N(C_{q}) : C_{q'})$. In additon, Theorem 0.1 in \cite{arithmetical_questions} implies that $|\syl_q(A, X_q) |$ divides $|\syl_q(G) |= (N: \N_N(C_{q}))= (N: \cent_N(C_{q}))$.  

Hence $n_C(G,X)$ divides the product 
\[
(N: \cent_N(C_{q})) \cdot (\cent_N(C_{q}) : C_{q'}) = (N:C_{q'}).
\]

We conclude that $n_C(G,X)$ divides $(G:C)= (N:C_{q'})$, and  the proof is complete.

\end{proofof}

Recall that a group $G$ is said to possess a Sylow tower, 
or equivalently to be a Sylow tower group,
if there exists a normal series where each successive quotient is isomorphic to a Sylow subgroup of $G$.
Groups with a Sylow tower are, of course, soluble.
Moreover, they comprise a subgroup-closed formation 
and so quotients and subgroups of Sylow tower groups are again Sylow tower.

\begin{corollary}\label{Cor:SylowTower}
Groups with a Sylow tower satisfy $(*)$.
\end{corollary}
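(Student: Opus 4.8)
The plan is to prove \autoref{Cor:SylowTower} by induction on $\card{G}$, exploiting the fact that a nontrivial Sylow tower group $G$ has, by definition, a normal Sylow $q$-subgroup for the \emph{largest} prime $q$ dividing $\card{G}$ (or, running the tower from the other end, a normal Hall $q'$-subgroup for a suitable prime $q$). The point is that the hypotheses of \autoref{Cor:Asterisk} must be met: we need a normal Hall $q'$-subgroup $N$, and we need $N$ together with all of its subgroups to satisfy property $(*)$.

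First I would dispose of the base case: the trivial group, and more generally any group in which there is nothing to check, satisfies $(*)$ vacuously. For the inductive step, let $G$ be a nontrivial Sylow tower group and assume every Sylow tower group of smaller order satisfies $(*)$. Since $G$ has a Sylow tower, there is a prime $q \in \varpi(G)$ such that $G$ possesses a normal Hall $q'$-subgroup $N$ (take $q$ to be the smallest prime in $\varpi(G)$, so that the bottom term of the tower, a normal subgroup whose quotient is a Sylow subgroup, can be arranged; more cleanly, any Sylow tower group has a normal subgroup of index equal to a Sylow number, and by reindexing the tower one obtains a normal Hall $q'$-subgroup for some prime $q$). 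Then $N$ is a proper subgroup of $G$ — indeed $q \mid \card{G}$ so $\card{N} < \card{G}$ — and $N$ is itself a Sylow tower group, being a subgroup of one. By the induction hypothesis $N$ satisfies $(*)$; moreover every subgroup of $N$ is again a Sylow tower group of order at most $\card{N} < \card{G}$, hence also satisfies $(*)$ by induction. Thus the hypotheses of \autoref{Cor:Asterisk} are fulfilled, and we conclude that $G$ satisfies $(*)$, completing the induction.

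The one genuine point requiring care — the main obstacle — is extracting, from the abstract definition of a Sylow tower, a normal Hall $q'$-subgroup rather than merely a normal Sylow subgroup, since \autoref{Cor:Asterisk} is phrased in terms of the former. If the normal series witnessing the Sylow tower is $1 = G_0 \lhd G_1 \lhd \dots \lhd G_k = G$ with $G_i/G_{i-1}$ isomorphic to a Sylow $p_i$-subgroup of $G$, then $G_{k-1}$ is a normal subgroup with $G/G_{k-1}$ a $p_k$-group, so $G_{k-1}$ is a normal Hall $p_k'$-subgroup of $G$ provided $p_k \nmid \card{G_{k-1}}$; this holds because the primes $p_1,\dots,p_k$ appearing in a Sylow tower can be taken distinct (one for each prime divisor of $\card{G}$), so we may set $q \coloneqq p_k$ and $N \coloneqq G_{k-1}$. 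Since the class of Sylow tower groups is, as noted in the text just before the statement, a subgroup-closed formation, $N$ and all its subgroups are Sylow tower groups, and the induction goes through. No further calculation is needed; everything else is packaged in \autoref{Cor:Asterisk}.
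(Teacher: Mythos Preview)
Your proof is correct and follows essentially the same route as the paper: induction on $\card{G}$, invoking \autoref{Cor:Asterisk} once you have a normal Hall $q'$-subgroup $N$ that, together with all its subgroups, satisfies $(*)$ by induction and subgroup-closure of the class of Sylow tower groups. The paper separates out the case where $G$ is a $q$-group before extracting $N$, whereas you fold this into the general step; and your parenthetical remark about taking $q$ to be the smallest prime is not quite right (Sylow towers need not be ordered by size of prime), but you recover the correct argument in your final paragraph by taking $q = p_k$ and $N = G_{k-1}$ from the tower.
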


\begin{proofof}
We argue by induction on the order of the group,
the base case of the trivial group being trivially true.
Let $G$ be a group with a Sylow tower and suppose the
claim is valid for all groups with a Sylow tower and order
strictly less than $|G|$. In case $G=Q$ is a $q$-group the claim is clearly true, and thus we may assume that the order of $G$  is divisible by at least two primes.  
Then there exists a prime $q$ and a Hall $q'$-subgroup $N$
of $G$ such that $N$ is normal in $G$ and $N < G$. 
Clearly $N$ is a  group with a Sylow tower and thus, by induction, 
$N$ and all its subgroups satisfy $(*)$.
Therefore, applying \autoref{Cor:Asterisk} yields that $G$
satisfies $(*)$, completing the induction.
\end{proofof}

A direct consequence of \autoref{Cor:SylowTower}
is that supersoluble groups satisfy $(*)$. 

Even though we have tried to prove the analogue of \autoref{Thm:A} 
for Carter subgroups, we have not managed to get either a counterexample 
or a proof and so we state it here as a question.
\begin{question}\label{Conj:Carter}
Assume $G$ is a soluble group and $X\leq G$ is a nilpotent subgroup of $G$. 
Is it true that if  $n_C(G,X) \neq 0$ then  $n_C(G,X) \equiv 1\md{\widehat{m}_G}$, 
where 
\[
\widehat{m}_G:= \gcd\left\{p-1: p\in\mathbb{P},\, p \mid (G:C) \mbox{ with } C\in\cart(G)\right\}? 
\]
\end{question}  

{\bf Acknowledgment.} 
We thank the referee for valuable comments that led to cleaner and better presentation and 
for pointing out an error in the original manuscript.

\bibliographystyle{amsalpha}
\bibliography{Bibliography}

\end{document}